\DeclareMathOperator{\CM}{\mathrm CM}
\DeclareMathOperator{\GL}{\mathrm GL}
\DeclareMathOperator{\norm}{\mathrm norm}
\DeclareMathOperator{\Norm}{\mathrm Norm}
\DeclareMathOperator{\rank}{\mathrm rank}
\DeclareMathOperator{\Reg}{\mathrm Reg}
\DeclareMathOperator{\Tor}{\mathrm Tor}
\DeclareMathOperator{\Trace}{\mathrm Trace}
\begin{document}
 \bibliographystyle{plain}

 \newtheorem{theorem}{Theorem}[section]
 \newtheorem{lemma}{Lemma}[section]
 \newtheorem{corollary}{Corollary}[section]
 \newtheorem{conjecture}{Conjecture}
 \newtheorem{proposition}{Proposition}[section]
 \newtheorem{definition}{Definition}
 \newcommand{\mc}{\mathcal}
 \newcommand{\A}{\mc A}
 \newcommand{\B}{\mc B}
 \newcommand{\eC}{\mc C}
 \newcommand{\D}{\mc D}
 \newcommand{\E}{\mc E}
 \newcommand{\F}{\mc F }
 \newcommand{\G}{\mc G}
 \newcommand{\hH}{\mc H}
 \newcommand{\I}{\mc I}
 \newcommand{\J}{\mc J}
 \newcommand{\eL}{\mc L}
 \newcommand{\M}{\mc M}
 \newcommand{\eN}{\mc N}
 \newcommand{\pP}{\mc P}
 \newcommand{\qq}{\mc Q}
 \newcommand{\eS}{\mc S}
 \newcommand{\eU}{\mc U}
 \newcommand{\V}{\mc V}
 \newcommand{\X}{\mc X}
 \newcommand{\Y}{\mc Y}
 \newcommand{\C}{\mathbb{C}}
 \newcommand{\R}{\mathbb{R}}
 \newcommand{\N}{\mathbb{N}}
 \newcommand{\bP}{\mathbb P}
 \newcommand{\Q}{\mathbb{Q}}
 \newcommand{\T}{\mathbb{T}}
 \newcommand{\Z}{\mathbb{Z}}
 \newcommand{\fA}{\mathfrak A}
 \newcommand{\fB}{\mathfrak B}
 \newcommand{\fD}{\mathfrak D}
 \newcommand{\fd}{\mathfrak d}
 \newcommand{\fE}{\mathfrak E}
 \newcommand{\ff}{\mathfrak F}
 \newcommand{\fI}{\mathfrak I}
 \newcommand{\ffi}{\mathfrak i}
 \newcommand{\fJ}{\mathfrak J}
 \newcommand{\fj}{\mathfrak j}
 \newcommand{\fP}{\mathfrak P}
 \newcommand{\fp}{\mathfrak p}
 \newcommand{\fQ}{\mathfrak Q}
 \newcommand{\fq}{\mathfrak q}
 \newcommand{\fU}{\mathfrak U}
 \newcommand{\fu}{\mathfrak u}
 \newcommand{\fV}{\mathfrak V}
 \newcommand{\fv}{\mathfrak v}
 \newcommand{\fb}{f_{\beta}}
 \newcommand{\fg}{f_{\gamma}}
 \newcommand{\gb}{g_{\beta}}
 \newcommand{\vphi}{\varphi}
 \newcommand{\vep}{\varepsilon}
 \newcommand{\bo}{\boldsymbol 0}
 \newcommand{\bone}{\boldsymbol 1}
 \newcommand{\ba}{\boldsymbol a}
 \newcommand{\bb}{\boldsymbol b}
 \newcommand{\bc}{\boldsymbol c}
 \newcommand{\be}{\boldsymbol e}
 \newcommand{\bk}{\boldsymbol k}
 \newcommand{\bell}{\boldsymbol \ell}
 \newcommand{\bm}{\boldsymbol m}
 \newcommand{\bn}{\boldsymbol n}
 \newcommand{\balpha}{\boldsymbol \alpha}
 \newcommand{\bgamma}{\boldsymbol \gamma}
 \newcommand{\bt}{\boldsymbol t}
 \newcommand{\bu}{\boldsymbol u}
 \newcommand{\bv}{\boldsymbol v}
 \newcommand{\bx}{\boldsymbol x}
 \newcommand{\bX}{\boldsymbol X}
 \newcommand{\bwy}{\boldsymbol y}
 \newcommand{\Bbeta}{\boldsymbol \beta}
 \newcommand{\bxi}{\boldsymbol \xi}
 \newcommand{\bbeta}{\boldsymbol \eta}
 \newcommand{\bw}{\boldsymbol w}
 \newcommand{\bz}{\boldsymbol z}
 \newcommand{\bzeta}{\boldsymbol \zeta}
 \newcommand{\hmu}{\widehat \mu}
 \newcommand{\oK}{\overline{K}}
 \newcommand{\oKt}{\overline{K}^{\times}}
 \newcommand{\oQ}{\overline{\Q}}
 \newcommand{\oq}{\oQ^{\times}}
 \newcommand{\oQt}{\oQ^{\times}/\Tor\bigl(\oQ^{\times}\bigr)}
 \newcommand{\ot}{\Tor\bigl(\oQ^{\times}\bigr)}
 \newcommand{\h}{\frac12}
 \newcommand{\hh}{\tfrac12}
 \newcommand{\dx}{\text{\rm d}x}
 \newcommand{\dbx}{\text{\rm d}\bx}
 \newcommand{\dy}{\text{\rm d}y}
 \newcommand{\dmu}{\text{\rm d}\mu}
 \newcommand{\dnu}{\text{\rm d}\nu}
 \newcommand{\dla}{\text{\rm d}\lambda}
 \newcommand{\dlav}{\text{\rm d}\lambda_v}
 \newcommand{\trho}{\widetilde{\rho}}
 \newcommand{\dtrho}{\text{\rm d}\widetilde{\rho}}
 \newcommand{\drho}{\text{\rm d}\rho}
  
\title[Lower Bounds for Regulators]{Lower Bounds for Regulators of number fields in terms of their discriminants}
\author{Shabnam~Akhtari}
\author{Jeffrey~D.~Vaaler}
\keywords{Regulator and discriminant of a number field, Weil height, Arakelov height}

\address{Department of Mathematics, University of Oregon, Eugene, Oregon 97403 USA}
\email{akhtari@uoregon.edu}

\address{Department of Mathematics, University of Texas, Austin, Texas 78712 USA}
\email{vaaler@math.utexas.edu}
\numberwithin{equation}{section}

\begin{abstract}  

We prove inequalities that compare the regulator of a number field with the absolute value of its discriminant. We refine the ideas in Silverman's work \cite{silverman1984} where such general inequalities are first proven. In order to prove our main theorems, we combine these refinements with the  authors' previous results on bounding the product of heights of relative units in a number field extension.
\end{abstract}

\maketitle

\section{Introduction}

Let $k$ be an algebraic  number field of degree $d \geq 2$ with regulator $\Reg(k)$,  discriminant 
$\Delta_k$, and absolute discriminant $D_k = \left| \Delta_k \right|$. We denote the ring of algebraic integers in $k$  by $O_k$  and we write $r(k)$ for 
the rank of the unit group $O_k^{\times}$. For every number field with large enough absolute discriminant, an interesting lower bound for $\Reg(k)$ 
in terms of $D_k$ has been established by Silverman in \cite{silverman1984} (see also 
\cite{Cus, pohst1977, remak1952} for such lower bounds in  special cases).
In \cite{friedman1989} Friedman has shown that $\Reg(k)$ takes its minimum value at the unique 
number field $k_0$ having degree $6$ over $\Q$, and having discriminant equal to $-10051$.  By Friedman's result we have
\begin{equation}\label{intro61}
0.2052 \dots = \Reg(k_0) \le \Reg(k)
\end{equation}
for all algebraic number fields $k$. 
Following \cite{silverman1984} we define
\begin{equation}\label{extra15}
\rho(k) = \max\big\{r\bigl(k^{\prime}\bigr) : \text{$k^{\prime} \subseteq k$ and $k^{\prime} \not= k$}\big\}.
\end{equation}
In \cite{silverman1984} Silverman shows that 
\begin{equation}\label{candgamma}
 c_{d} \left(\log \gamma_{d} D_{k}\right)^{\left(r(k) - \rho(k)\right)} < \Reg(k),
\end{equation}
with 
\begin{equation}\label{Silvermanvalues}
c_{d} = 2^{-4d^2} \, \, \qquad \textrm{and} \, \,  \quad\gamma_d = d^{-d^{\log_{2}8d}},
\end{equation}
 and it is understood that $1 < \gamma_d D_k$.

This lower bound is improved in \cite{friedman1989}, where Friedman shows there are computable, positive, absolute constants $C_4$ and $C_5$ 
such that the inequality \eqref{candgamma} holds with
\begin{equation}\label{friedmanremark}
c_d = C_4 d^{-2r+\rho-\frac{1}{2}} (C_5 \log d)^{-3\rho}\, \, \qquad \textrm{and} \, \,  \quad \gamma_d = d^{-d}
\end{equation}
(see the remark after the proof of Theorem C on page 617 of \cite{friedman1989}).

In order to sharpen the values of $c_{d}$ and $\gamma_d$ in the inequalities \eqref{candgamma} that are given in \eqref{Silvermanvalues}  and \eqref{friedmanremark},
we use our results  in \cite{akhtari2021, akhtari2016} that bound the regulators and relative regulators of an extension of number fields  by heights  of units and relative units in the number field extension. 
First we recall that  $\rho(k) = r(k)$ if and only if $k$ is a $\CM$-field (see   
\cite[Corollary 1 to Proposition 3.20]{narkiewicz2010}). If $k$ is a $\CM$-field, then the absolute discriminant of $k$ will not appear in the lower bound 
in \eqref{candgamma}, and in this case the  inequality \eqref{intro61} provides a sharp lower bound. For this reason, in our main theorems  we will 
assume that the number field $k$ is not a $\CM$-field. Another simple case is when $k$ is a totally real quadratic number field. In this case $r(k) = 1$ and $\rho(k) = 0$, and it can be easily seen that 
$$
\tfrac{1}{2} \log \frac{D_{k}}{4} \leq \Reg(k).
$$
So we may assume $d \geq 3$ if need be.
In Theorem \ref{Sil-rho} we will show that one may take $\gamma_d = d^{-d}$,
and in Theorem \ref{Sil-alpha} we will show that one may take $\gamma_d = d^{-\frac{d^{\log_{2} d}}{2}}$.
Both theorems provide explicit values for $c_{d}$ that are  larger than $2^{-4d^2}$.  For clarity  and since  different general strategies are 
used in the proofs,
we state these two theorems separately. 

\begin{theorem}\label{Sil-rho}
Let $k$ be a number field of degree $d \geq 3$ that is not a $\CM$-field, with the unit rank $r = r(k)$ and absolute discriminant $D_{k}$. Let 
$\gamma_{d} = d^{-d}$ and assume that
\begin{equation*}\label{intro65}
1 < \gamma_d D_k.
\end{equation*}
Then we have
\begin{equation}\label{firstDiscineq}
\frac{(2r)!}{(r!)^3}\biggl(\frac{\log \log d}{2 \log d}\biggr)^{3 \rho(k)} \biggl(\frac{\log \left(\gamma_{d} D_k \right)}{4 d}\biggr)^{r - \rho(k)} 
		\le   \Reg(k).
\end{equation}
\end{theorem}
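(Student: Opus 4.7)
The plan is to follow Silverman's subfield framework from \cite{silverman1984} while replacing his intermediate height estimates by sharper ones obtained in the authors' previous papers \cite{akhtari2016, akhtari2021}.

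Since $k$ is not a $\CM$-field, the definition \eqref{extra15} provides a proper subfield $k'\subsetneq k$ realizing $\rho(k)=r(k')<r$. I would choose a fundamental system $\varepsilon_1,\dots,\varepsilon_{\rho(k)}$ of units of $k'$ modulo torsion, and extend it by $r-\rho(k)$ further units $\eta_1,\dots,\eta_{r-\rho(k)}\in O_k^{\times}$ so that the full collection is a fundamental system of units of $k$; the $\eta_j$ then represent $\Z$-linearly independent classes in the relative unit group of $k/k'$ studied in \cite{akhtari2021}.

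Next, I would apply the regulator--height inequality developed in \cite{akhtari2016}, which for any fundamental system of units $u_1,\dots,u_r$ of $k$ gives
\begin{equation*}
\frac{(2r)!}{(r!)^3}\prod_{i=1}^{r}h(u_i)\;\le\;\Reg(k).
\end{equation*}
The combinatorial factor $(2r)!/(r!)^3$ arises from a convex-geometric comparison, in the logarithmic embedding, between the parallelepiped spanned by the $u_i$ and a symmetric convex body whose gauge is twice the Weil height. Applying this to the system $\{\varepsilon_i\}\cup\{\eta_j\}$ reduces the problem to lower bounds for the individual heights $h(\varepsilon_i)$ and $h(\eta_j)$.

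Two distinct estimates then handle the two types of units. For each $\varepsilon_i\in O_{k'}^{\times}$, I would invoke Voutier's explicit form of Dobrowolski's inequality, which yields a factor of order $(\log\log d/\log d)^{3}$ per unit and contributes the full factor $\bigl(\log\log d/(2\log d)\bigr)^{3\rho(k)}$ in \eqref{firstDiscineq}. For each relative unit $\eta_j$, which by construction does not lie in any proper subfield of $k$ of unit rank $\rho(k)$, I would derive a sharpened Silverman-type discriminant bound of the form
\begin{equation*}
\frac{\log(\gamma_d D_k)}{4d}\;\le\;h(\eta_j),
\end{equation*}
with the improved value $\gamma_d=d^{-d}$, by combining the Hermite/Minkowski discriminant inequality on $\Q(\eta_j)\subseteq k$ with the relative-regulator bounds of \cite{akhtari2021}.

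I expect the principal difficulty to lie in establishing this improved discriminant-dependent bound with $\gamma_d = d^{-d}$ in place of Silverman's $d^{-d^{\log_{2}8d}}$. Silverman's original argument processes a tower of intermediate subfields of $k$ and loses a factor of $d$ in the exponent of $\gamma_d$ at each of roughly $\log_{2}8d$ levels. The key saving should come from the authors' relative-regulator estimates in \cite{akhtari2021}, which let us control a product of heights of relative units in $k/k'$ directly in terms of the relative regulator of the extension; this collapses the tower argument into a single relative step and removes the iterated losses, producing the cleaner $\gamma_d = d^{-d}$. Assembling these pieces yields \eqref{firstDiscineq}.
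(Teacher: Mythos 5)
There is a genuine gap, and it occurs at the two load-bearing steps of your outline. First, the inequality you attribute to \cite{akhtari2016}, namely $\frac{(2r)!}{(r!)^3}\prod_{i=1}^{r}h(u_i)\le\Reg(k)$ \emph{for any fundamental system}, is false: for $r\ge 2$ one may replace $u_2$ by $u_1^Nu_2$ and make the height product arbitrarily large while $\Reg(k)$ stays fixed. The results of \cite{akhtari2016,akhtari2021} are existential — they produce \emph{some} multiplicatively independent units (resp.\ relative units) whose height product is small, as in \eqref{short440}, \eqref{short445} and \eqref{early361} — so you cannot first fix your system (fundamental units of a subfield $k'$ realizing $\rho(k)$, extended by relative units; note also that such an extension to a fundamental system of $k$ need not exist, only up to finite index) and then apply the bound to it. Moreover, the constant $(2r)!/(r!)^3$ comes from the absolute statement \eqref{early361}, which also carries factors $d^r$ and $2^r$; the combined relative-unit statement, Proposition \ref{lemshort2}, only yields the weaker constant $r!$, so even a corrected version of your route would not reproduce the constant in \eqref{firstDiscineq}. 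Second, the ``sharpened Silverman-type bound'' $h(\eta_j)\ge\log(\gamma_dD_k)/(4d)$ with $\gamma_d=d^{-d}$ is asserted but not proved, and it does not follow from \eqref{Dhm} applied to $\Q(\eta_j)$: a relative unit of $k/k'$ may well lie in a proper subfield of $k$ whose discriminant is tiny compared with $D_k$, and relating $D_{\Q(\eta_j)}$ to $D_k$ is exactly the hard point. That comparison is what the maximal-subfield ($\aleph$-function) machinery in the proof of Theorem \ref{Sil-alpha} is designed for, and even there it only yields $\gamma_d=d^{-d^{\log_2 d}/2}$, so the claim that the relative-regulator bounds ``collapse the tower'' and give $d^{-d}$ is unsupported.

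The paper's actual proof of Theorem \ref{Sil-rho} avoids both issues by never comparing subfield discriminants to $D_k$ and never bounding heights of relative units. It takes the special independent units $\alpha_1,\dots,\alpha_r$ furnished by \eqref{early361}; since $k$ is not a $\CM$-field these generate $k$, i.e.\ $k=\Q(\alpha_1,\dots,\alpha_r)$, and Lemma \ref{lemB2}, Lemma \ref{lemB3} and Proposition \ref{thmthird1} give $\log D_k\le d\log d+2d\sum_{j}(N_j-1)h(\alpha_j)$ as in \eqref{early390}, where the $N_j$ are the relative degrees in the tower $\Q\subseteq\Q(\alpha_1)\subseteq\cdots\subseteq k$. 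Since each $k_j$ contains $j$ independent units, at most $\rho(k)+1$ of the $N_j$ exceed $1$ (see \eqref{extra18}); ordering the heights and using $\sum_j(N_j-1)\le d-1$ then shows that every one of the $r-\rho(k)$ largest heights satisfies $d\,h(\alpha_j)\ge\log(\gamma_dD_k)/(2d)$, as in \eqref{extra67}, while the $\rho(k)$ smallest heights are bounded below by Dobrowolski--Voutier \cite{dobrowolski1979,voutier1996}. Feeding these lower bounds back into \eqref{early361} gives \eqref{firstDiscineq}. If you want to pursue your subfield/relative-unit decomposition, you are in effect redoing the proof of Theorem \ref{Sil-alpha}, and you would need both the discriminant-comparison mechanism for intermediate fields and a correct, existential use of Proposition \ref{lemshort2}.
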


In the proof of Theorem \ref{Sil-rho},  assuming  the truth of Lehmer's conjecture, one can conclude that
\begin{equation*}
\mathfrak{c}^{\rho(k)} \frac{(2r)!}{2^r\, (r!)^3} \biggl(\frac{\log \left(\gamma_{d} D_k \right)}{2 d}\biggr)^{r - \rho(k)} 
		\le   \Reg(k),
\end{equation*}
where $\mathfrak{c}$ is an absolute positive constant.
By appealing to a result of Amoroso and David \cite{AmDa}, which gives a lower bound for the product of heights of algebraic numbers, we  may proceed with the proof of  Theorem \ref{Sil-rho} in Section \ref{therhomethod} to obtain an 
inequality between the regulator and the absolute discriminant that is sharper than \eqref{firstDiscineq}  in terms of the degree of the number field. We obtain
\begin{equation}\label{amorosodavidapplication}
\mathfrak{c}_{0}\, \frac{(2r)!}{(r!)^3} \, \frac{d^{\rho(k)-1}} {(1 + \log d)^{\rho(k) \kappa}} \biggl(\frac{\log \left(\gamma_{d} D_k \right)}{4d}\biggr)^{r - \rho(k)} \le  \Reg(k),
\end{equation}
where $\mathfrak{c}_{0}$ and $\kappa$ depend only on $\rho(k)$ (see the remark at the end of Section \ref{therhomethod} for an explicit version deduced from \cite{AmoV}).


As it is expected that the values obtained for $c_d$ could be improved, we explore two different approaches in our proofs. Our next result is similar to Theorem \ref{Sil-rho}, but is proven using a significantly different strategy which might be useful in some future research. 
\begin{theorem}\label{Sil-alpha}
Let $k$ be a number field of degree $d \geq 3$ that is not a $\CM$-field, with the unit rank $r$ and absolute discriminant $D_{k}$. Let 
$\gamma_d = d^{-\frac{d^{\log_{2} d}}{2}}$ and assume that
\begin{equation*}\label{intro65.5}
1 < \gamma_d D_k.
\end{equation*}
Then we have
\begin{equation}\label{ineq-alpha}
\frac{0.2}{r !} \, \biggl(  \frac{2 d \, {\log \left(\gamma_{d}  D_k \right)}}{(d-2)\, d^{\log_{2}d}}
\biggr)^{r - \rho(k)} 
		\le   \Reg(k).
\end{equation}
\end{theorem}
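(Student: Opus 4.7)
The plan is to follow the general strategy of Silverman's 1984 paper---first bound $\Reg(k)$ from below by a product of Weil heights of a system of units, then bound each of those heights from below in terms of $\log D_k$ via a discriminant--height inequality---while systematically replacing Silverman's intermediate bounds with the sharper relative-regulator and relative-height inequalities that the authors established in \cite{akhtari2016, akhtari2021}. The structural difference with Theorem~\ref{Sil-rho} is that here we do \emph{not} invoke Dobrowolski-type lower bounds on individual heights; instead we iterate a discriminant--height inequality along a tower of subfields, which is what produces the $d^{\log_2 d}$ scaling in $\gamma_d$.

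By definition of $\rho(k)$, I would first choose a proper subfield $k^{\prime} \subsetneq k$ with $r(k^{\prime}) = \rho(k)$; since $k$ is not a $\CM$-field, the relative unit rank is $s := r - \rho(k) \geq 1$. Expressing the full unit lattice of $O_k^{\times}$ as an extension of $O_{k^{\prime}}^{\times}/\ot$ by the lattice of relative units modulo torsion, a Hadamard/Gram-determinant argument (Silverman's Lemma on regulators, refined through the authors' relative-regulator inequality) should give
\begin{equation*}
\Reg(k) \,\geq\, \frac{\Reg(k^{\prime})}{r!}\prod_{i=1}^{s} h(u_i)
\end{equation*}
for a suitable basis $u_1,\dots,u_s$ of torsion-free relative units. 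Friedman's unconditional minimum \eqref{intro61} yields $\Reg(k^{\prime}) \geq 0.2052 > 0.2$ when $k^{\prime} \ne \Q$, while $\Reg(\Q) = 1$ otherwise; either way this contributes the prefactor $0.2/r!$.

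The heart of the proof is the lower bound for each $h(u_i)$. I would apply Silverman's discriminant--height inequality from \cite{silverman1984} iteratively along a descending tower of subfields of $k$. Each step of the descent roughly doubles the relative degree and contributes one factor of $d$ to the denominator, so after at most $\log_2 d$ iterations we reach $k$ itself and accumulate the characteristic exponent $d^{\log_2 d}$. Replacing Silverman's intermediate constants with the sharper relative-height bounds from \cite{akhtari2016} is what produces the refined coefficient $2d/\bigl((d-2)\, d^{\log_2 d}\bigr)$ and, crucially, improves the exponent in $\gamma_d$ from Silverman's $d^{\log_2 8d}$ down to $d^{\log_2 d}/2$. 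Multiplying the $s$ resulting height bounds together with the displayed regulator inequality then yields \eqref{ineq-alpha}.

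The main obstacle lies precisely in this iterative descent: one must arrange for the degree of the intermediate subfield to genuinely double at each step (which requires a judicious replacement of $u_i$ by products of conjugates, as in Silverman's original argument, but now taken inside the relative unit group so as not to destroy the splitting used in step one), and one must simultaneously track the accumulated multiplicative constants tightly enough that they collapse into the clean explicit form appearing in \eqref{ineq-alpha}, with the factor $1/2$ in the exponent of $\gamma_d$. This is the technical price one pays for avoiding the $\bigl((\log\log d)/\log d\bigr)^{3\rho(k)}$ factor that appears in Theorem~\ref{Sil-rho}.
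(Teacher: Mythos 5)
Your opening step is broadly consonant with the paper: by Proposition \ref{lemshort2} together with Friedman's bound \eqref{intro61} and the height--regulator inequality from \cite{akhtari2016}, one gets $0.2\prod_j\bigl([k:\Q]h(\psi_j)\bigr)\le r!\,\Reg(k)$ for a system of low-height relative units $\psi_j$ over a suitable base subfield, which is essentially your displayed regulator inequality with the prefactor $0.2/r!$. The genuine gap is in what you call the heart of the proof. You choose the base field $k'$ solely by the condition $r(k')=\rho(k)$ and then hope to lower-bound each $h(u_i)$ by iterating a discriminant--height inequality along a tower with doubling degrees. But nothing in that choice of $k'$ prevents a relative unit $\psi$ from lying in, or generating, an intermediate field of very small discriminant; in that case $D_{\Q(\psi)}$ --- the only quantity an inequality such as \eqref{Dhm} can see --- carries no information about $D_k$, and no iteration or replacement of $\psi$ by products of conjugates (which in any case need not stay in the relative unit group or keep the heights controlled) recovers $\log(\gamma_d D_k)$. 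You flag exactly this as the unresolved obstacle, and it is precisely the step on which the theorem rests.

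The paper resolves it by a different selection of the intermediate field, not by any descent. It introduces the tower-length function $\lambda$ and the normalization $\aleph(k')=\bigl(2[k:k']\bigr)^{\lambda(k')-\lambda(k)}$, and takes $k^{\ast}$ to be a \emph{maximal} proper subfield with $D_{k^{\ast}}<D_k^{\aleph(k^{\ast})}$ (possibly $k^{\ast}=\Q$), rather than a subfield of maximal unit rank. Maximality forces $D_{k^{\ast}(\psi)}\ge D_k^{\aleph(k^{\ast}(\psi))}$ for every relative unit $\psi\notin k^{\ast}$, and the tower inequality $D_{k^{\ast}(\psi)}\le D_{k^{\ast}}^{[k^{\ast}(\psi):k^{\ast}]}D_{\Q(\psi)}^{[k^{\ast}(\psi):\Q(\psi)]}$ combined with Lemma \ref{why2} yields $D_{\Q(\psi)}>D_k^{\,d^{1-\log_2 d}}$; this is where the exponent $d^{\log_2 d}$ and the factor $\tfrac12$ in $\gamma_d$ actually come from (the $2$ in $\aleph$ and the bound $m\le d/2$), not from accumulating constants over $\log_2 d$ iterations. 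Feeding this into \eqref{Dhm} gives the per-unit bound \eqref{h>Dl}, and since $r(k^{\ast})\le\rho(k)$ implies $r(k/k^{\ast})\ge r-\rho(k)$, the exponent $r-\rho(k)$ in \eqref{ineq-alpha} still appears even though $k^{\ast}$ need not have unit rank $\rho(k)$. Without this discriminant-based choice of the base field (or an equivalent device), your argument cannot produce the required lower bounds on the heights of the relative units.
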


We recall that  $r(k) + 1$ is the number of archimedean places of $k$, and  therefore $d - 2 \le 2r(k) < 2d$.
Thus in Theorems \ref{Sil-rho} and \ref{Sil-alpha} we may express explicit  values for the constant $c_{d}$ in \eqref{candgamma} in 
terms of $d$ only. In order to compare the values of $c_d$ given in Theorems \ref{Sil-rho} and \ref{Sil-alpha} with that in \eqref{friedmanremark}, we may use 
Stirling's formula
$$
\sqrt{2\pi}\,  n^{n+\frac{1}{2}} e^{-n} \leq n! \leq e\,   n^{n+\frac{1}{2}} e^{-n},
$$
where $n$ is any positive integer.

For the lower bound given in  \eqref{firstDiscineq} for  $\Reg(k)$, we have
\begin{eqnarray*}
&&\frac{(2r)!}{(r!)^3}\biggl(\frac{\log \log d}{2 \log d}\biggr)^{3 \rho(k)} \biggl(\frac{\log \left(\gamma_{d} D_k \right)}{4 d}\biggr)^{r - \rho(k)} \geq \\
&&\frac{ \sqrt{2\pi}\,  (2r)^{2r+\frac{1}{2}} e^{-2r}} {e^3  r^{3r+\frac{3}{2}} e^{-3r}}    \biggl(\frac{\log \log d}{2 \log d}\biggr)^{3 \rho(k)} \biggl(\frac{\log \left(\gamma_{d} D_k \right)}{4 d}\biggr)^{r - \rho(k)} =\\
&&\frac{ \sqrt{2\pi}\,  2^{2r+\frac{1}{2}} e^{r}} {e^3  r^{r+1} }   \biggl(\frac{\log \log d}{2 \log d}\biggr)^{3 \rho(k)} \biggl(\frac{\log \left(\gamma_{d} D_k \right)}{4 d}\biggr)^{r - \rho(k)}>\\
&&\frac{ \sqrt{2\pi}\,  2^{2\rho(k)+\frac{1}{2}} e^{r}} {e^3  d^{2r+1-\rho(k)} }   \biggl(\frac{\log \log d}{2 \log d}\biggr)^{3 \rho(k)} \biggl(\log \left(\gamma_{d} D_k \right)\biggr)^{r - \rho(k)}.\end{eqnarray*}
Therefore, Theorem \ref{Sil-rho} gives a lower bound that  is larger than the lower bound  \eqref{friedmanremark} by at least a factor $e^{d/2} d^{-1/2} \left(\log \log d\right)^{3\rho(k)}$.


For the left-hand-side of \eqref{ineq-alpha}, we have
\begin{eqnarray*}
&&\frac{0.2}{r !} \, \biggl( \frac{2 d \, {\log \left(\gamma_{d}  D_k \right)}}{(d-2)\, d^{\log_{2}d}}\biggr)^{r - \rho(k)} 
>\\
& &0.2 e^{r-1}\, r^{-r-\frac{1}{2}} \,  \left(d^{\log_{2}d}\right)^{-r + \rho(k)} 
 \left(2 \, {\log \left(\gamma_{d}  D_k \right)}\right)^{r - \rho(k)} >\\
 & &0.2 \frac{e^{r-1}}{ \left(\frac{d^{\log_{2}d}}{2}\right)^{r - \rho(k)} } \, d^{-r-\frac{1}{2}} \,  
 \left( {\log \left(\gamma_{d}  D_k \right)}\right)^{r - \rho(k)}.
\end{eqnarray*}
Therefore, the lower bound obtained in Theorem \ref{Sil-alpha} is larger than the lower bound  \eqref{friedmanremark} by  at least a factor
$$\frac{e^{d/2} \left( \log d\right)^{3\rho(k)}}{ \left(\frac{d^{\log_{2}d}}{2d}\right)^{ r- \rho(k)}} .$$


In Theorems \ref{Sil-rho} and \ref{Sil-alpha}, we assume that 
$1 < \gamma_{d} D_{k}$.
Suppose that for a number field $k$ of degree $d$, we have $\gamma_d D_k   \leq 1$, where $\gamma_d$ is any of the values assumed in 
Theorems \ref{Sil-rho} and \ref{Sil-alpha}.  Then  by \eqref{intro61}, we have 
$$
\log D_k   < 5 \, \log \gamma_{d }^{-1} \, \Reg(k).
$$
This gives a stronger  lower bound for the regulators of number fields with small absolute discriminant than those stated  in our main theorems above.



This manuscript is organized as follows.
Section \ref{prelim} is a preliminary one and contains an overview of the Weil and Arakelov heights.  
In Section \ref{heightReg} we recall some lower bounds for the regulators and relative regulators  in terms of a product of  heights of ordinary and relative units. 
In Section \ref{heightdisc} for an algebraic 
number field $k$ of degree $d$, we obtain inequalities that relate Arakelov heights defined on $k^{d}$ and the absolute discriminant of $k$.
In Section \ref{specialheight} we prove inequalities relating  the Weil and Arakelov heights.
Section \ref{therhomethod} includes the proof of Theorem \ref{Sil-rho}, and Section 
\ref{thealphafunction} includes the proof of Theorem \ref{Sil-alpha}.

\section{The Weil and Arakelov heights}\label{prelim}

Let $k$ be an algebraic number field of degree $d$ over $\Q$.  At each place $v$ of $k$ we write $k_v$ for the completion of $k$ at $v$.
We work with two distinct absolute values $\|\ \|_v$ and $|\ |_v$ from each place $v$.  These are related by
\begin{equation*}\label{B4}
\|\ \|_v^{d_v/d} = |\ |_v,
\end{equation*}
where $d_v = [k_v : \Q_v]$ is the local degree at $v$, and $d = [k : \Q]$ is the global degree.  If $v | \infty$ then the restriction of $\|\ \|_v$ to 
$\Q$ is the usual archimedean absolute value on $\Q$, and if $v | p$ then the restriction of $\|\ \|_v$ to $\Q$ is the usual $p$-adic absolute value on $\Q$.
Then the absolute logarithmic Weil height is the map
\begin{equation*}\label{B6}
h : k^{\times} \rightarrow [0, \infty)
\end{equation*}
defined at each algebraic number $\alpha \not= 0$ in $k$ by the sum
\begin{equation}\label{B12}
h(\alpha) = \sum_v \log^+ |\alpha|_v = \hh \sum_v \bigl|\log |\alpha|_v \bigr|.
\end{equation}
In both sums there are only finitely many nonzero terms, and the equality on the right of (\ref{B12}) follows from the product formula.  It can be shown 
that the value of $h(\alpha)$ does not depend on the field $k$ that contains $\alpha$.  Hence the Weil height may be regarded as a map
\begin{equation*}\label{B14}
h : \oq \rightarrow [0, \infty).
\end{equation*}

Let $N \in \mathbb{N}$. At each place $v$ of $k$ we define a norm
\begin{equation*}\label{B50}
\|\ \|_v : k_v^{N+1} \rightarrow [0, \infty)
\end{equation*}
on (column) vectors $\bxi = (\xi_n)$ by
\begin{equation*}\label{B55}
\|\bxi\|_v = \begin{cases}    \bigl(\|\xi_0\|_v^2 + \|\xi_1\|_v^2 + \|\xi_2\|_v^2 + \cdots + \|\xi_N\|_v^2\bigr)^{\h}&    \text{if $v | \infty$,}\\
                                                           &   \\
                                                  \max\big\{\|\xi_0\|_v, \|\xi_1\|_v, \|\xi_2\|_v, \dots , \|\xi_N\|_v\big\}&      \text{if $v \nmid \infty$.}\end{cases}
\end{equation*}
We define a second norm 
\begin{equation*}\label{B60}
|\ |_v : k_v^{N+1} \rightarrow [0, \infty)
\end{equation*}
at each place $v$ by setting 
\begin{equation*}\label{B65}
|\bxi|_v = \|\bxi\|_v^{d_v/d}.
\end{equation*}
A vector $\bxi \not= \bo$ in $k^{N+1}$ has finitely many coordinates, and it follows that 
\begin{equation*}\label{B70}
|\bxi|_v =1
\end{equation*}
for all but finitely many places $v$ of $k$.  Then the Arakelov height
\begin{equation*}\label{B75}
H : k^{N+1} \setminus \{\bo\} \rightarrow [1, \infty)
\end{equation*}
is defined by
\begin{equation*}\label{B80}
H(\bxi) = \prod_v |\bxi|_v.
\end{equation*}
If $\bxi \not= \bo$, and $\xi_m \not= 0$ is a 
nonzero coordinate of $\bxi$, then using the product formula we get
\begin{equation*}\label{B82}
1 = \prod_v |\xi_m|_v \le \prod_v |\bxi|_v = H(\bxi).
\end{equation*}
Thus $H$ takes values in the interval $[1, \infty)$.  If $\eta \not= 0$ belongs to $k$, and $\bxi \not= \bo$ is a vector in $k^{N+1}$, then a second 
application of the product formula shows that
\begin{equation*}\label{B85}
H(\eta \bxi) = \prod_v |\eta \bxi|_v = \prod_v |\eta|_v |\bxi|_v = \prod_v |\bxi|_v = H(\bxi).
\end{equation*}
More information about the Arakelov height is contained in \cite{bombieri2006}.

\section{Weil  Heights and Regulators}\label{heightReg}

Throughout this section we suppose that  $k$ and $l$ are algebraic number fields with $k \subseteq l$.  We write $r(k)$ for the rank of the unit
group $O_k^{\times}$, and $r(l)$ for the rank of the unit group $O_l^{\times}$.  Then $k$ has $r(k) + 1$ 
archimedean places, and $l$ has $r(l) + 1$ archimedean places.  In general we have $r(k) \le r(l)$, and we recall 
(see \cite[Proposition 3.20]{narkiewicz2010}) that $r(k) = r(l)$ if and only if $l$ is a $\CM$-field, and $k$ is the maximal 
totally real subfield of $l$.  

The norm is a homomorphism of multiplicative groups
\begin{equation*}\label{unit1}
\Norm_{l/k} : l^{\times} \rightarrow k^{\times}.
\end{equation*}
If $v$ is a place of $k$, then each element $\alpha$ in $l^{\times}$ satisfies the identity
\begin{equation*}\label{unit3}
[l : k] \sum_{w|v} \log |\alpha|_w = \log |\Norm_{l/k}(\alpha)|_v.
\end{equation*}
It follows that the norm, restricted to the subgroup $O_l^{\times}$ of units, is a homomorphism
\begin{equation*}\label{unit5}
\Norm_{l/k} : O_l^{\times} \rightarrow O_k^{\times},
\end{equation*}
and the norm, restricted to the torsion subgroup in $O_l^{\times}$, is also a homomorphism
\begin{equation*}\label{unit7}
\Norm_{l/k} : \Tor\bigl(O_l^{\times}\bigr) \rightarrow \Tor\bigl(O_k^{\times}\bigr).
\end{equation*}
Therefore we get a well defined homomorphism, which we write as
\begin{equation*}\label{unit9}
\norm_{l/k} : O_l^{\times}/\Tor\bigl(O_l^{\times}\bigr) \rightarrow O_k^{\times}/\Tor\bigl(O_k^{\times}\bigr),
\end{equation*}
and define by
\begin{equation*}\label{unit11}
\norm_{l/k}\bigl(\alpha \Tor\bigl(O_l^{\times}\bigr)\bigr) = \Norm_{l/k}(\alpha) \Tor\bigl(O_k^{\times}\bigr).
\end{equation*}
However, to simplify notation we write
\begin{equation*}\label{unit15}
F_k = O_k^{\times}/\Tor\bigl(O_k^{\times}\bigr),\quad\text{and}\quad F_l = O_l^{\times}/\Tor\bigl(O_l^{\times}\bigr),
\end{equation*}
and we write the elements of the quotient groups $F_k$ and $F_l$ as coset representatives rather than cosets.
Obviously $F_k$ and $F_l$ are free abelian groups of rank $r(k)$ and $r(l)$, respectively. 

Following Costa and Friedman \cite{costa1991}, the subgroup of relative units in $O_l^{\times}$ is defined by
\begin{equation*}\label{unit19}
\big\{\alpha \in O_l^{\times} : \Norm_{l/k}(\alpha) \in \Tor\bigl(O_k^{\times}\bigr)\big\}.
\end{equation*}
Alternatively, we work in the free group $F_l$ where the image of the subgroup of relative units is the kernel of the homomorphism 
$\norm_{l/k}$.  That is, we define the subgroup of {\it relative units} in $F_l$ to be the subgroup
\begin{equation*}\label{unit21}
E_{l/k} = \big\{\alpha \in F_l : \norm_{l/k}(\alpha) = 1\big\}.
\end{equation*}
We also write
\begin{equation*}\label{unit23}
I_{l/k} = \big\{\textrm{norm}_{l/k}(\alpha) : \alpha \in F_l\big\} \subseteq F_k
\end{equation*}
for the image of the homomorphism $\norm_{l/k}$.  If $\beta$ in $F_l$ represents a coset in the subgroup $F_k$, then 
we have
\begin{equation*}\label{unit25}
\norm_{l/k}(\beta) = \beta^{[l : k]}.
\end{equation*}  
Therefore the image $I_{l/k} \subseteq F_k$ is a subgroup of rank $r(k)$, and the index satisfies
\begin{equation*}\label{unit27}
[F_k : I_{l/k}] < \infty.
\end{equation*}
It follows that $E_{l/k} \subseteq F_l$ is a subgroup of rank $r(l/k) = r(l) - r(k)$, and we restrict our attention here to extensions 
$l/k$ such that $r(l/k)$ is positive.    

Let $\eta_1, \eta_2, \dots , \eta_{r(l/k)}$ be a collection of multiplicatively independent relative units that form a basis for the subgroup 
$E_{l/k}$.  At each archimedean place $v$ of $k$ we select a place $\widehat{w}_v$ of $l$ such that $\widehat{w}_v | v$.  Then we 
define an $r(l/k) \times r(l/k)$ real matrix 
\begin{equation*}\label{unit32}
M_{l/k} = \bigl([l_w : \Q_w] \log \|\eta_j\|_w\bigr),
\end{equation*}
where $w$  is an archimedean place of $l$, but $w \not= \widehat{w}_v$ for each $v|\infty$, $w$ indexes rows, and 
$j = 1, 2, \dots , r(l/k)$ indexes columns.  We write $l_w$ for the completion of $l$ at the place $w$, $\Q_w$ for the
completion of $\Q$ at the place $w$, and we write $[l_w : \Q_w]$ for the local degree.  Of course $\Q_w$ is isomorphic
to $\R$ in the situation considered here.  As in \cite{costa1991}, we define the {\it relative regulator} of the extension $l/k$ to be 
the positive number
\begin{equation}\label{unit34}
\Reg\bigl(E_{l/k}\bigr) = \bigl|\det M_{l/k}\bigr|.
\end{equation}
It follows, as in the proof of \cite[Theorem 1]{costa1991} (see also \cite{costa1993}), that the
value of the determinant on the right of (\ref{unit34}) does not depend on the choice of places $\widehat{w}_v$ for 
each archimedean place $v$ of $k$.

It follows from \cite[Theorem 1.2]{akhtari2016} that there exist multiplicatively independent elements 
$\beta_1, \beta_2, \dots , \beta_{r(k)}$ in $F_k$ such that
\begin{equation}\label{short440}
\prod_{i = 1}^{r(k)} \bigl([k : \Q] h(\beta_i)\bigr) \le r(k)! \Reg(k).
\end{equation}

Suppose $k, l$ are distinct algebraic number fields, that $k$ is not $\Q$, $k$ is not an imaginary quadratic extension of $\Q$,
and $r(l) > r(k)$.  In \cite[Theorem 1.1]{akhtari2021} it is shown that that there exist multiplicatively independent elements $\psi_1, \psi_2, \dots , \psi_{r(l/k)}$
in the group $E_{l/k}$ of relative units such that
\begin{equation}\label{short445}
\prod_{j = 1}^{r(l/k)} \bigl([l : \Q] h(\psi_j)\bigr) \le r(l/k)! \Reg\bigl(E_{l/k}\bigr).
\end{equation}
It is shown in \cite{akhtari2021} that the two sets of multiplicatively independent units in \eqref{short440} and \eqref{short445} can be combined. The following is Corollary 1.2 of  \cite{akhtari2021}.
\begin{proposition}\label{lemshort2}  Let $\beta_1, \beta_2, \dots , \beta_{r(k)}$ be multiplicatively independent units in $F_k$
that satisfy {\rm (\ref{short440})}, and let $\psi_1, \psi_2, \dots , \psi_{r(l/k)}$ be multiplicatively independent units in $E(l/k)$ that satisfy
{\rm (\ref{short445})}.  Then the elements in the set
\begin{equation*}\label{short450}
\big\{\beta_1, \beta_2, \dots , \beta_{r(k)}\big\} \cup \big\{\psi_1, \psi_2, \dots , \psi_{r(l/k)}\big\}
\end{equation*}
are multiplicatively independent units in $F_l$, and they satisfy
\begin{equation*}\label{short455}
\prod_{i = 1}^{r(k)} \bigl([k : \Q] h(\beta_i)\bigr) \prod_{j = 1}^{r(l/k)} \bigl([l : \Q] h(\psi_j)\bigr) \le r(l)! \Reg(l).
\end{equation*}
\end{proposition}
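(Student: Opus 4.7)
The plan is to handle the two assertions of the proposition separately: first the multiplicative independence, and then the product-of-heights bound.

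For multiplicative independence, I would apply the homomorphism $\norm_{l/k} : F_l \to F_k$ to any hypothetical relation
\[
\prod_{i=1}^{r(k)} \beta_i^{a_i} \prod_{j=1}^{r(l/k)} \psi_j^{b_j} = 1
\]
in $F_l$. Since $\psi_j \in E_{l/k} = \ker(\norm_{l/k})$ and $\norm_{l/k}(\beta_i) = \beta_i^{[l:k]}$ for $\beta_i \in F_k$, the relation collapses to $\prod_i \beta_i^{a_i[l:k]} = 1$ in the torsion-free group $F_k$. Independence of the $\beta_i$ forces $a_i = 0$ for all $i$, after which substituting back and using independence of the $\psi_j$ forces $b_j = 0$. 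This argument also shows that the inclusion $F_k \hookrightarrow F_l$ together with $E_{l/k} \hookrightarrow F_l$ have trivial intersection, which is what I expect to exploit next.

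For the height inequality, the central input is a Costa--Friedman-type factorization
\[
\Reg(l) = [F_k : I_{l/k}] \, \Reg(k) \, \Reg\bigl(E_{l/k}\bigr).
\]
I would prove this by choosing any basis $\gamma_1,\dots,\gamma_{r(k)}$ of $I_{l/k}$, lifting each $\gamma_i$ to some $\alpha_i \in F_l$ with $\norm_{l/k}(\alpha_i) = \gamma_i$, and combining these with a basis $\eta_1,\dots,\eta_{r(l/k)}$ of $E_{l/k}$. Because $F_l / E_{l/k} \cong I_{l/k}$ is free abelian, the resulting set is a $\Z$-basis of $F_l$, so $\Reg(l)$ is the absolute value of the determinant of the associated regulator matrix. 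For each archimedean place $v$ of $k$ with $v \neq v_0$, replacing the $\widehat{w}_v$-row by the sum $\sum_{w \mid v}$ of all rows above $v$ puts the matrix into block-triangular form via the identity
\[
\sum_{w \mid v} [l_w : \Q_w] \log \|\alpha\|_w = [k_v : \Q_v] \log \|\Norm_{l/k}(\alpha)\|_v
\]
and the vanishing of $\norm_{l/k}$ on $E_{l/k}$. The two diagonal blocks are then the regulator matrix of the sublattice $I_{l/k} \subseteq F_k$, whose absolute determinant is $[F_k : I_{l/k}]\,\Reg(k)$, and the relative regulator matrix of $E_{l/k}$, whose absolute determinant is $\Reg(E_{l/k})$.

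Granted the factorization, multiplying the two given bounds \eqref{short440} and \eqref{short445} yields
\[
\prod_{i=1}^{r(k)} \bigl([k:\Q] h(\beta_i)\bigr) \prod_{j=1}^{r(l/k)} \bigl([l:\Q] h(\psi_j)\bigr) \le r(k)! \, r(l/k)! \, \Reg(k) \, \Reg\bigl(E_{l/k}\bigr) = \frac{r(k)!\, r(l/k)!}{[F_k : I_{l/k}]} \, \Reg(l),
\]
and the conclusion follows from $[F_k : I_{l/k}] \ge 1$ and $r(k)!\, r(l/k)! \le r(l)!$ (equivalently $\binom{r(l)}{r(k)} \ge 1$). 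The main technical step will be the careful block-triangularization bookkeeping behind the factorization; once it is in hand, the combination with the two ingredient bounds is immediate.
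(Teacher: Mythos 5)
Your proposal is correct, and in fact it supplies an argument that the paper itself does not: the paper simply quotes this statement as Corollary 1.2 of \cite{akhtari2021}, so there is no in-text proof to compare against. Your two steps are both sound. The independence argument via applying $\norm_{l/k}$, using $E_{l/k}=\kker(\norm_{l/k})$, $\norm_{l/k}(\beta_i)=\beta_i^{[l:k]}$, and the torsion-freeness of $F_k$ and $F_l$, is exactly the standard one. For the height bound, your factorization $\Reg(l)=[F_k:I_{l/k}]\,\Reg(k)\,\Reg\bigl(E_{l/k}\bigr)$ is correct and your block-triangularization works: since $F_l/E_{l/k}\cong I_{l/k}$ is free, lifts of a basis of $I_{l/k}$ together with a basis of $E_{l/k}$ form a basis of $F_l$; adding, for each archimedean $v\neq v_0$ of $k$, the rows above $v$ into the $\widehat{w}_v$-row kills the $E_{l/k}$-columns there by the identity $\sum_{w\mid v}[l_w:\Q_w]\log\|\alpha\|_w=[k_v:\Q_v]\log\|\Norm_{l/k}(\alpha)\|_v$, and (taking $\widehat{w}_{v_0}=w_0$, the omitted place) the two diagonal blocks have absolute determinants $[F_k:I_{l/k}]\Reg(k)$ and $\Reg\bigl(E_{l/k}\bigr)$, the latter being independent of the choices of $\widehat{w}_v$ as the paper notes. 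This identity is equivalent to the Costa--Friedman relation $\Reg(l)=[l:k]^{r(k)}\,[F_l:F_kE_{l/k}]^{-1}\Reg(k)\Reg\bigl(E_{l/k}\bigr)$, via $[l:k]^{r(k)}=[F_k:I_{l/k}]\,[F_l:F_kE_{l/k}]$, so your constant is right, and the final step $[F_k:I_{l/k}]\ge 1$ together with $r(k)!\,r(l/k)!\le r(l)!$ gives the stated inequality; this is essentially the same mechanism that underlies the cited proof in \cite{akhtari2021} and \cite{costa1991}.
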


\section{Arakelov heights and discriminants}\label{heightdisc}

In this section we suppose that $k  \subseteq \oQ$, where $\oQ$ is a fixed algebraic closure of $\Q$.  Then we write 
$\sigma_1, \sigma_2, \dots , \sigma_d$, for the distinct embeddings
\begin{equation*}\label{third5}
\sigma_j : k \rightarrow \oQ.
\end{equation*}
If $\Bbeta = (\beta_i)$ is a (column) vector in $k^d$ we define the $d \times d$ matrix
\begin{equation}\label{third8}
M(\Bbeta) = \bigl(\sigma_j(\beta_i)\bigr),
\end{equation}
where $i = 1, 2, \dots , d$, indexes rows and $j = 1, 2, \dots , d$, indexes columns.  We also define
\begin{equation*}\label{third11}
\B(k) = \big\{\Bbeta = (\beta_i) \in k^d : \text{$\beta_1, \beta_2, \dots , \beta_d$ are $\Q$-linearly independent}\big\}.
\end{equation*}
Then the matrix $M(\Bbeta)$ is nonsingular if and only if $\Bbeta$ belongs
to $\B(k)$.  Moreover, if $\alpha \not= 0$ belongs to $k$ then
\begin{equation}\label{third13}
\det M(\alpha \Bbeta) = \Norm_{k/\Q}(\alpha) \det M(\Bbeta),
\end{equation}
and if $A$ is a $d \times d$ matrix in the general linear group $\GL(d, \Q)$ we find that
\begin{equation}\label{third15}
\det M(A \Bbeta) = \det \bigl(A M(\Bbeta)\bigr) = \det A \det M(\Bbeta).
\end{equation}
These results are proved in \cite[Proposition 2.9]{narkiewicz2010}.

The product
\begin{equation*}\label{third19}
M(\Bbeta) M(\Bbeta)^T = \bigl(\Trace_{k/\Q}(\beta_i \beta_j)\bigr)
\end{equation*}
is a $d \times d$ matrix with entries in $\Q$.  Therefore, if $\Bbeta$ belongs to $\B(k)$ then
\begin{equation}\label{third23}
(\det M(\Bbeta))^2 = \det \bigl(M(\Bbeta) M(\Bbeta)^T\bigr) = \det \bigl(\Trace_{k/\Q}(\beta_i \beta_j)\bigr)
\end{equation}
is a nonzero rational number, and if $\Bbeta$ also has entries in $O_k$ then (\ref{third23}) is a nonzero integer.  It will be convenient to define the function
\begin{equation*}\label{third32}
f_k : \B(k) \rightarrow [0, \infty)
\end{equation*}
by
\begin{equation}\label{third37}
f_k(\Bbeta) = \big\|\det\bigl( M(\Bbeta) M(\Bbeta)^T\bigr)\big\|_{\infty} \prod_{v \nmid \infty} \|\Bbeta\|_v^{2 d_v}.
\end{equation}
Here $\|\ \|_{\infty}$ is the usual archimedean absolute value on $\Q$, and the product on the right of (\ref{third37}) is over the set of all
nonarchimedean places $v$ of $k$.  If $\alpha \not= 0$ belongs to $k$ and $\Bbeta$ belongs to $\B(k)$, then it follows using (\ref{third13})
and the product formula that
\begin{equation}\label{third43}
\begin{split}
f_k(\alpha \Bbeta) &= \big\|\det M(\alpha \Bbeta) M(\alpha \Bbeta)^T\big\|_{\infty} \prod_{v \nmid \infty} \|\alpha \Bbeta\|_v^{2 d_v}\\
    &= \biggl(\|\Norm_{k/\Q}(\alpha)\|_{\infty}^2 \prod_{v \nmid \infty} \|\alpha\|_v^{2 d_v}\biggr) 
    		\big\|\det M(\Bbeta) M(\Bbeta)^T\big\|_{\infty}^2 \prod_{v \nmid \infty} \|\Bbeta\|_v^{2d_v} \\
    &= \biggl(\prod_{v \mid \infty} \|\alpha\|_v^{2 d_v} \prod_{v \nmid \infty} \|\alpha\|_v^{2 d_v}\biggr) f_k(\Bbeta) = f_k(\Bbeta).
\end{split}
\end{equation}

For $\Bbeta = (\beta_i)$ in $\B(k)$, the fractional ideal generated by $\beta_1, \beta_2, \dots , \beta_d$, is the subset
\begin{equation}\label{third111}
\fJ(\Bbeta) = \big\{\eta \in k : \text{$\|\eta\|_v \le \|\Bbeta\|_v$ at each $v \nmid \infty$}\big\}.
\end{equation}
And the $\Z$-module generated by $\beta_1, \beta_2, \dots , \beta_d$ is
\begin{equation}\label{third113}
\M(\Bbeta) = \big\{\bxi^T \Bbeta = \xi_1 \beta_1 + \xi_2 \beta_2 + \cdots + \xi_d \beta_d : \bxi \in \Z^d\big\}.
\end{equation}
It is obvious that $\M(\Bbeta)$ is a subgroup of $\fJ(\Bbeta)$, and both $\M(\Bbeta)$ and $\fJ(\Bbeta)$ are free abelian groups of rank $d$.  
Hence the index $\bigl[\fJ(\Bbeta) : \M(\Bbeta)\bigr]$ is finite.  If $\alpha \not= 0$ belongs to $k$ and $\Bbeta$ is a vector in $\B(k)$ then 
using (\ref{third111}) we find that
\begin{equation}\label{third115}
\fJ(\alpha \Bbeta) = \big\{\eta \in k : \text{$\|\eta\|_v \le \|\alpha\|_v \|\Bbeta\|_v$ at each $v \nmid \infty$}\big\} = \alpha \fJ(\Bbeta),
\end{equation}
and in a similar manner we get
\begin{equation}\label{third118}
\M(\alpha \Bbeta) = \alpha \M(\Bbeta).
\end{equation}
Then it follows from (\ref{third115}) and (\ref{third118}) that 
\begin{equation}\label{third121}
\alpha \mapsto \bigl[\fJ(\alpha \Bbeta) : \M(\alpha \Bbeta)\bigr]
\end{equation}
is constant for $\alpha \not= 0$ in $k$.

Our next result shows that $f_k$ takes positive integer values on $\B(k)$ and provides a useful upper bound for the absolute discriminant.

\begin{proposition}\label{thmthird1}  Let $\Bbeta = (\beta_i)$ belong to $\B(k)$.   Let $\fJ(\Bbeta)$ be the fractional
ideal generated by $\beta_1, \beta_2, \dots , \beta_d$ as in {\rm (\ref{third111})}, and let $\M(\Bbeta)$ be the $\Z$-module generated by
$\beta_1, \beta_2, \dots , \beta_d$ as in {\rm (\ref{third113})}.  Then we have
\begin{equation}\label{third129}
f_k(\Bbeta) = \bigl[\fJ(\Bbeta) : \M(\Bbeta)\bigr]^2 D_k \le H(\Bbeta)^{2d},
\end{equation}
where $D_k$ is the absolute discriminant of $k$, and $\bigl[\fJ(\Bbeta) : \M(\Bbeta)\bigr]$ is the index of $\M(\Bbeta)$ in $\fJ(\Bbeta)$.
\end{proposition}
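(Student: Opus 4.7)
The plan is to prove the claimed equality first, then the claimed inequality, treating the two factors of $f_k(\Bbeta)$ in \eqref{third37}---the archimedean factor $\|\det M(\Bbeta)M(\Bbeta)^T\|_\infty$ and the nonarchimedean factor $\prod_{v\nmid\infty}\|\Bbeta\|_v^{2d_v}$---separately. Both parts rely only on classical facts, and scale-invariance (established in \eqref{third43}) means no normalization of $\Bbeta$ is needed.

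For the equality, I would introduce a $\Z$-basis $\bgamma=(\gamma_i)$ of the fractional ideal $\fJ(\Bbeta)$. Because $\M(\Bbeta)$ is a full-rank sublattice of $\fJ(\Bbeta)$, there is a matrix $B\in M_d(\Z)$ with $|\det B|=[\fJ(\Bbeta):\M(\Bbeta)]$ and $\Bbeta=B\bgamma$. The identity \eqref{third15}, applied in the form $M(B\bgamma)=BM(\bgamma)$, together with the classical formula $|\disc(\bgamma)|=\Norm(\fJ(\Bbeta))^2 D_k$ for the discriminant of an integral basis of a fractional ideal, yields
$$\bigl\|\det M(\Bbeta)M(\Bbeta)^T\bigr\|_\infty \;=\; [\fJ(\Bbeta):\M(\Bbeta)]^2\,\Norm(\fJ(\Bbeta))^2\, D_k.$$
For the nonarchimedean factor, the definition \eqref{third111} of $\fJ(\Bbeta)$ shows that at each finite place $v$ the $\fp_v$-adic valuation of $\fJ(\Bbeta)$ equals $\min_i\ord_{\fp_v}(\beta_i)$, and matching the paper's normalization of $\|\cdot\|_v$ to this valuation gives
$$\prod_{v\nmid\infty}\|\Bbeta\|_v^{2d_v} \;=\; \Norm(\fJ(\Bbeta))^{-2}.$$
Multiplying the two expressions produces the asserted $f_k(\Bbeta)=[\fJ(\Bbeta):\M(\Bbeta)]^2 D_k$.

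For the inequality, I would apply Hadamard's inequality to the $d\times d$ complex matrix $M(\Bbeta)=(\sigma_j(\beta_i))$ to obtain
$$\bigl\|\det M(\Bbeta)M(\Bbeta)^T\bigr\|_\infty \;=\; \bigl|\det M(\Bbeta)\bigr|^2 \;\le\; \prod_{j=1}^{d}\sum_{i=1}^{d}|\sigma_j(\beta_i)|^2.$$
Grouping the embeddings $\sigma_j$ by the archimedean place $v$ they determine, each real $v$ contributes exactly one column of squared $2$-norm $\|\Bbeta\|_v^2=\|\Bbeta\|_v^{2d_v}$, while each complex $v$ contributes two columns, each of squared $2$-norm $\|\Bbeta\|_v^2$, together $\|\Bbeta\|_v^{2d_v}$. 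Hence the Hadamard bound is exactly $\prod_{v\mid\infty}\|\Bbeta\|_v^{2d_v}$, and multiplying by the nonarchimedean factor gives $f_k(\Bbeta)\le\prod_v\|\Bbeta\|_v^{2d_v}=H(\Bbeta)^{2d}$.

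The main obstacle is only bookkeeping: one must verify that the normalization identity $\|\Bbeta\|_v^{d_v}=\Norm(\fp_v)^{-\min_i\ord_{\fp_v}(\beta_i)}$ is implied by the paper's conventions for $\|\cdot\|_v$ at finite places, and one must pair conjugate complex embeddings correctly in the Hadamard estimate so that each complex place contributes its full weight $d_v=2$ without double counting. Once these normalization checks are in place, the two halves of \eqref{third129} follow immediately.
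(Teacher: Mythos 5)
Your proof is correct and follows essentially the same route as the paper: a $\Z$-basis $\bgamma$ of $\fJ(\Bbeta)$ with change-of-basis matrix of integer entries and determinant $\pm\bigl[\fJ(\Bbeta):\M(\Bbeta)\bigr]$, the classical discriminant--norm identity for the ideal, cancellation of $\Norm_{k/\Q}\fJ(\Bbeta)^2$ against the nonarchimedean part of the height, and Hadamard's inequality with the embeddings grouped by archimedean places. The only deviation is cosmetic: the paper first reduces to the case where $\fJ(\Bbeta)$ is an integral ideal by scaling with some $\alpha\in O_k$ (using \eqref{third43} and \eqref{third121}) so that it can cite the discriminant formula for integral ideals and obtain \eqref{third147} from the two descriptions of the ideal, whereas you invoke the fractional-ideal version of that formula directly and verify $\prod_{v\nmid\infty}\|\Bbeta\|_v^{2d_v}=\Norm_{k/\Q}\fJ(\Bbeta)^{-2}$ by the local valuation computation, which is equally valid.
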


\begin{proof}  First we prove the equality on the left of (\ref{third129}).  And we assume to begin with that $\fJ(\Bbeta)$ is an integral ideal, or 
equivalently that
\begin{equation*}\label{third134}
\|\Bbeta\|_v \le 1\quad\text{at each nonarchimedean place $v$ of $k$}.
\end{equation*}
Let $\gamma_1, \gamma_2, \dots , \gamma_d$ be a basis for $\fJ(\Bbeta)$ as a $\Z$-module, and write
$\bgamma = (\gamma_j)$ for the corresponding vector in $\B(k)$.  By a basic identity for the discriminant of an integral ideal, 
see \cite[Proposition 2.13]{narkiewicz2010}, we have
\begin{equation}\label{third139}
\|\det M(\bgamma) M(\bgamma)^T\|_{\infty} = \bigl(\norm_{k/\Q} \fJ(\Bbeta)\bigr)^2 D_k = \bigl[O_k : \fJ(\Bbeta)\bigr]^2 D_k,
\end{equation}
where 
\begin{equation*}\label{third141}
M(\bgamma) = \bigl(\sigma_j(\gamma_i)\bigr) 
\end{equation*}
is the $d \times d$ matrix defined as in \eqref{third8}.  As $\beta_1, \beta_2, \dots , \beta_d$ belong to $\fJ(\Bbeta)$ there exists a unique, nonsingular,
$d \times d$ matrix $A = (a_{i j})$ with entries in $\Z$ such that
\begin{equation}\label{third143}
\beta_i = \sum_{j = 1}^d a_{i j} \gamma_j,\quad\text{or equivalently $\Bbeta = A \bgamma$}.
\end{equation}
It follows from (\ref{third111}) that $\|\bgamma\|_v \le \|\Bbeta\|_v$ for each $v \nmid \infty$, and it follows from (\ref{third143}) and the strong triangle
inequality that $\|\Bbeta\|_v \le \|\bgamma\|_v$ for each $v \nmid \infty$.  Then from (\ref{third143}) we also get
\begin{equation}\label{third145}
\bigl[\fJ(\Bbeta) : \M(\Bbeta)\bigr] = \|\det A\|_{\infty}.
\end{equation}
As $\fJ(\Bbeta)$ is an integral ideal generated (as an ideal) by $\beta_1, \beta_2, \dots , \beta_d$ and also generated (as a $\Z$-module)
by $\gamma_1, \gamma_2, \dots , \gamma_d$,  we have
\begin{equation}\label{third147}
\prod_{v \nmid \infty} \|\Bbeta\|_v^{-d_v} = \prod_{v \nmid \infty} \|\bgamma\|_v^{-d_v} = \norm_{k/\Q} \fJ(\Bbeta) = \bigl[O_k : \fJ(\Bbeta)\bigr].
\end{equation}
We combine (\ref{third15}), (\ref{third139}), (\ref{third145}) and (\ref{third147}), and conclude that
\begin{equation*}\label{third151}
\begin{split}
\big\|\det M(\Bbeta) M(\Bbeta)^T\big\|_{\infty}&\prod_{v \nmid \infty} \|\Bbeta\|_v^{2d_v}\\
                    &= \big\|\det M(A \bgamma) M(A \bgamma)^T\big\|_{\infty} \prod_{v \nmid \infty} \|\bgamma\|_v^{2d_v}\\
                    &= \|\det A\|_{\infty}^2 \big\|\det M(\bgamma) M(\bgamma)^T\big\|_{\infty} \bigl[O_k : \fJ(\Bbeta)\bigr]^{-2}\\
                    &= \bigl[\fJ(\Bbeta) : \M(\Bbeta)\bigr]^2 D_k.
\end{split}
\end{equation*}
This proves the equality on the left of (\ref{third129}) under the assumption that $\fJ(\Bbeta)$ is an integral ideal.
 
If $\fJ(\Bbeta)$ is a fractional ideal in $k$, but not necessarily an integral ideal, then there exists an algebraic integer $\alpha \not= 0$ in $O_k$ such that 
$\alpha \fJ(\Bbeta) = \fJ( \alpha \Bbeta)$ is an integral ideal.  Therefore we get the identity
\begin{equation}\label{third153}
f_k(\alpha \Bbeta) = \bigl[\fJ(\alpha \Bbeta) : \M(\alpha \Bbeta)\bigr]^2 D_k
\end{equation}
by the case already considered.  We use (\ref{third43}), (\ref{third121}), and (\ref{third153}), to establish the equality on the left of 
(\ref{third129}) in general.

Next we prove the inequality on the right of (\ref{third129}).  We assume that $\oQ \subseteq \C$, and write $|\ |$ for the usual Hermitian absolute value
on $\C$.  Each embedding
\begin{equation*}\label{third166}
\sigma_j : k \rightarrow \oQ \subseteq \C
\end{equation*}
determines an archimedean place $v$ of $k$ such that
\begin{equation*}\label{third171}
\|\eta\|_v = |\sigma_j(\eta)|\quad\text{for $\eta$ in $k$.}
\end{equation*}
As $j = 1, 2, \dots , d$, each real archimedean place $v$ occurs once and each complex archimedean place $v$ occurs twice.
Then Hadamard's inequality applied to the matrix $M(\Bbeta) = \bigl(\sigma_j(\beta_i)\bigr)$ leads to
\begin{equation}\label{third176}
\begin{split}
\|\det M(\Bbeta) M(\Bbeta)^T\|_{\infty} &= \bigl|\det \bigl(\sigma_j(\beta_i)\bigr)\bigr|^2\\
                        &\le \prod_{j = 1}^d \biggl(\sum_{i = 1}^d |\sigma_j(\beta_i)|^2\biggr)\\
                        &= \prod_{v | \infty} \biggl(\sum_{i = 1}^d \|\beta_i\|_v^2\biggr)^{d_v}\\
                        &= \prod_{v | \infty} \|\Bbeta\|_v^{2 d_v}.
\end{split}
\end{equation}
It follows from (\ref{third176}) that
\begin{equation}\label{third181}
\begin{split}
f_k(\Bbeta) &= \|\det M(\Bbeta) M(\Bbeta)^T\|_{\infty} \prod_{v \nmid \infty} \|\Bbeta\|_v^{2 d_v}\\
	&\le \prod_{v | \infty} \|\Bbeta\|_v^{2 d_v} \prod_{v \nmid \infty} \|\Bbeta\|_v^{2 d_v} = H(\Bbeta)^{2 d}.
\end{split}
\end{equation}
Now (\ref{third181}) verifies the inequality on the right of (\ref{third129}). 
\end{proof}

\section{Special height inequalities}\label{specialheight}

In this section we present inequalities where the Arakelov height $H(\balpha)$ is bounded by the Weil height of the coordinates of $\balpha$.
Such inequalities are useful when $H$ is applied to vectors having coordinates that satisfy simple algebraic conditions.

\begin{lemma}\label{lemB1}  Let $k$ be an algebraic number field and let $\alpha \not= 0$ be a point in $\oQ$ such that 
$M = [k(\alpha) : k]$.  Let $\ba = \bigl(\alpha^{m-1}\bigr)$ be the column vector in $k^M$ where $m = 1, 2, \dots , M$, indexes rows.  
Then we have
\begin{equation}\label{B107}
\log H(\ba) \le \hh \log M + (M-1)h(\alpha).
\end{equation}
\end{lemma}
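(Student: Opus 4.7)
The statement is about a concrete Arakelov height, so I would just compute $H(\mathbf{a})$ place by place over the field $L=k(\alpha)$ (the vector lives in $L^M$, and $H$ does not depend on the field of definition). The Weil height of $\alpha$ is given as a weighted sum of $\log^+\|\alpha\|_v$, and I want to show that $\log H(\mathbf{a})$ is close to $(M-1)h(\alpha)$, with only a small overshoot of $\tfrac12\log M$ coming from the archimedean Euclidean norm.

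\textbf{Step 1: Compute $\|\mathbf{a}\|_v$ at each place.} Since the coordinates are $1,\alpha,\alpha^2,\dots,\alpha^{M-1}$ and $\|\alpha^m\|_v=\|\alpha\|_v^m$, at a non-archimedean place the sup-norm definition gives
\[
\|\mathbf{a}\|_v \;=\; \max_{0\le m\le M-1}\|\alpha\|_v^{m} \;=\; \max\bigl(1,\|\alpha\|_v\bigr)^{M-1},
\]
while at an archimedean place the Euclidean definition combined with the trivial bound $\sum_{m=0}^{M-1}x^{2m}\le M\max(1,x)^{2(M-1)}$ (split by whether $\|\alpha\|_v\le 1$ or not) yields
\[
\|\mathbf{a}\|_v \;\le\; M^{1/2}\,\max\bigl(1,\|\alpha\|_v\bigr)^{M-1}.
\]

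\textbf{Step 2: Assemble into $H(\mathbf{a})$.} Using $|\mathbf{a}|_v=\|\mathbf{a}\|_v^{d_v/[L:\Q]}$ and taking logarithms,
\[
\log H(\mathbf{a}) \;=\; \frac{1}{[L:\Q]}\sum_{v} d_v \log\|\mathbf{a}\|_v.
\]
Plugging in the bounds from Step 1 splits the right-hand side into two pieces. The factor $M^{1/2}$ contributes only at archimedean $v$, and using $\sum_{v\mid\infty} d_v=[L:\Q]$ this yields exactly $\tfrac12\log M$. The remaining $\max(1,\|\alpha\|_v)^{M-1}$ factor appears at every place, and
\[
\frac{1}{[L:\Q]}\sum_{v} d_v (M-1)\log^+\|\alpha\|_v \;=\; (M-1)\sum_v \log^+|\alpha|_v \;=\; (M-1)\,h(\alpha),
\]
by the definition of the Weil height as stated in (\ref{B12}). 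Summing the two contributions gives exactly the claimed inequality (\ref{B107}).

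\textbf{Main obstacle.} There is no real obstacle; the proof is a direct unwinding of the definitions. The only step requiring a small observation is the archimedean bound $\sum_{m=0}^{M-1}\|\alpha\|_v^{2m}\le M\max(1,\|\alpha\|_v)^{2(M-1)}$, which is why the constant $\tfrac12\log M$ (rather than $0$) appears on the right-hand side; the non-archimedean places contribute no such loss thanks to the ultrametric inequality.
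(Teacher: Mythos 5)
Your proof is correct and follows essentially the same route as the paper's: the ultrametric identity $\|\ba\|_w=\max\{1,\|\alpha\|_w\}^{M-1}$ at finite places, the bound $\sum_{m=0}^{M-1}\|\alpha\|_w^{2m}\le M\max\{1,\|\alpha\|_w\}^{2(M-1)}$ at archimedean places, and summation using $\sum_{w\mid\infty}[l_w:\Q_w]=[l:\Q]$ to isolate the $\tfrac12\log M$ term and recover $(M-1)h(\alpha)$. No gaps; your working over $L=k(\alpha)$ rather than an arbitrary field $l\supseteq k(\alpha)$ is an immaterial difference since $H$ and $h$ are independent of the field of definition.
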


\begin{proof}  Let $l$ be an algebraic number field such that $k \subseteq k(\alpha) \subseteq l$ and let $w$ be a place of $l$.  If 
$w \nmid \infty$ we find that
\begin{equation}\label{B110}
|\ba|_w = \max\big\{1, |\alpha|_w, \dots , |\alpha|_w^{M - 1}\big\} = \max\big\{1, |\alpha|_w\big\}^{(M - 1)}.
\end{equation}
If $w | \infty$ we get
\begin{equation*}\label{B115}
\begin{split}
\|\ba\|_w = \Bigl(1 + \|\alpha\|_w^2 + \|\alpha\|_w^4 + \cdots + \|\alpha\|_w^{2M - 2}\Bigr)^{\h}
                   \le M^{\h} \max\big\{1, \|\alpha\|_w\big\}^{(M - 1)},
\end{split}
\end{equation*}                   
and then                   
\begin{equation}\label{B120}
\log |\ba|_w \le \frac{[l_w : \Q] \log M}{2 [l : \Q]} + (M - 1) \log ^+ |\alpha|_w.
\end{equation}
Combining (\ref{B110}) and (\ref{B120}), we find that
\begin{equation*}\label{B125}
\begin{split}
\log H(\ba) &= \sum_w \log |\ba|_w\\ 
		&\le \sum_{w | \infty} \frac{[l_w : \Q_w] \log M}{2 [l : \Q]} + (M - 1) \sum_w \log^+ |\alpha|_w\\
		&= \hh \log M + (M - 1) h(\alpha).
\end{split}
\end{equation*}
This verifies the inequality (\ref{B107}).
\end{proof}

If $K$ is a field and $K(\alpha)$ is a simple, algebraic extension of $K$ of positive degree $N$, then every element $\eta$ in $K(\alpha)$ has 
a unique representation of the form
\begin{equation*}\label{B17}
\eta = \sum_{n=0}^{N-1} c(n)\alpha^n,\quad\text{where}\quad c(n)\in K.
\end{equation*}
This extends to fields obtained by adjoining finitely many algebraic elements using a simple inductive argument.

\begin{lemma}\label{lemB2}  Let $K \subseteq L$ be fields, let $\alpha_1, \alpha_2, \dots , \alpha_M$, be elements of $L$,
and assume that each $\alpha_m$ is algebraic over $K$.  Define positive integers $N_m$ by
\begin{equation}\label{B19}
N_1 = [K(\alpha_1) : K],
\end{equation}
and by
\begin{equation}\label{B20}
N_m = [K(\alpha_1, \alpha_2, \dots , \alpha_m):K(\alpha_1, \alpha_2, \dots , \alpha_{m-1})]
\end{equation}
for $m = 2, 3, \dots , M$.
Then every element $\eta$ in $K(\alpha_1, \alpha_2, \dots , \alpha_M)$ has a unique representation of the form 
\begin{equation}\label{B21}
\eta = \sum_{n_1 = 0}^{N_1-1}\sum_{n_2 = 0}^{N_2-1} \cdots \sum_{n_M = 0}^{N_M-1} 
	c(\bn) \alpha_1^{n_1}\alpha_2^{n_2} \cdots \alpha_M^{n_M},\quad\text{where}\quad c(\bn)\in K.
\end{equation}
Moreover, $K(\alpha_1, \alpha_2, \dots , \alpha_M)/K$ is a finite extension of degree $N_1 N_2 \cdots N_M$, and the elements in the set
\begin{equation}\label{B21.2}
\big\{\alpha_1^{n_1}\alpha_2^{n_2} \cdots \alpha_M^{n_M}: 0 \le n_m < N_m,\ m=1, 2, \dots , M\big\}
\end{equation}
form a basis for $K(\alpha_1, \alpha_2, \dots , \alpha_M)$ as a vector space over $K$.
\end{lemma}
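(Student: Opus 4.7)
The plan is to proceed by induction on $M$, using the already-stated simple case as the base. That preliminary remark (the paragraph preceding the lemma) asserts precisely the $M=1$ conclusion: a simple algebraic extension $K(\alpha)/K$ of degree $N$ has every element representable uniquely as $\sum_{n=0}^{N-1} c(n)\alpha^n$ with $c(n)\in K$. So the base case requires no new argument.

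For the inductive step, assume the statement holds for $M-1$ algebraic elements. Set $K' = K(\alpha_1,\alpha_2,\dots,\alpha_{M-1})$, so that
\[
K(\alpha_1,\alpha_2,\dots,\alpha_M) = K'(\alpha_M),
\]
and by definition $[K'(\alpha_M):K'] = N_M$. The simple-extension fact applied to $K' \subseteq K'(\alpha_M)$ produces, for each $\eta \in K(\alpha_1,\dots,\alpha_M)$, a unique expression
\[
\eta = \sum_{n_M=0}^{N_M-1} d(n_M)\,\alpha_M^{n_M},\qquad d(n_M)\in K'.
\]
Then I apply the inductive hypothesis to each coefficient $d(n_M) \in K'$ to expand it uniquely as $\sum c(n_1,\dots,n_{M-1},n_M)\,\alpha_1^{n_1}\cdots\alpha_{M-1}^{n_{M-1}}$ with coefficients in $K$ and exponents in the prescribed ranges. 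Substituting yields exactly the representation \eqref{B21}.

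For uniqueness of \eqref{B21}, suppose two such expressions agree. Grouping terms by the power of $\alpha_M$ gives an equality of polynomials of degree less than $N_M$ in $\alpha_M$ with coefficients in $K'$; by the base case those $K'$-coefficients must match term by term. Each such coefficient is itself an expansion in monomials in $\alpha_1,\dots,\alpha_{M-1}$ covered by the inductive hypothesis, so the individual $c(\bn)\in K$ must agree. Finally, the tower law gives
\[
[K(\alpha_1,\dots,\alpha_M):K] = \prod_{m=1}^{M} N_m,
\]
and the spanning set in \eqref{B21.2} has exactly $N_1 N_2 \cdots N_M$ elements; since it spans and has cardinality equal to the dimension, it is a basis.

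There is no real obstacle here — the only thing to be careful about is the order of the induction (peeling off $\alpha_M$ rather than $\alpha_1$), since this is what matches the definition of $N_m$ in \eqref{B19}–\eqref{B20} and keeps the coefficient field at each stage equal to the previously built extension. The rest is bookkeeping.
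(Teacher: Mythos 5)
Your proof is correct and follows essentially the same route as the paper: induction on $M$, peeling off $\alpha_M$ via the simple-extension representation over $K' = K(\alpha_1,\dots,\alpha_{M-1})$, then the tower law for the degree and the spanning-set/dimension count for the basis. The only cosmetic difference is that you prove uniqueness of \eqref{B21} directly by a second induction, whereas the paper deduces it afterward from the fact that a spanning set of cardinality at most the dimension must be a basis; both are fine.
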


\begin{proof}  We argue by induction on $M$.  If $M = 1$ then the result is well known.  Therefore we assume that $M \ge 2$.  As 
\begin{equation*}\label{B21.7}
K(\alpha_1, \dots , \alpha_{M-1}, \alpha_M)/K(\alpha_1, \dots , \alpha_{M-1})
\end{equation*}
is a simple extension, the element $\eta$ in $K(\alpha_1, \dots , \alpha_{M-1}, \alpha_M)$ has a unique representation of the form
\begin{equation}\label{B22}
\eta = \sum_{n_M = 0}^{N_M-1} a(n_M) \alpha_M^{n_M},
		\quad\text{where}\quad a(n_M)\in K(\alpha_1, \alpha_2, \dots , \alpha_{M-1}).
\end{equation}
By the inductive hypothesis each coefficient $a(n_M)$ has a representation in the form
\begin{equation}\label{B23}
a(n_M) = \sum_{n_1 = 0}^{N_1-1}\sum_{n_2 = 0}^{N_2-1} \cdots \sum_{n_{M-1} = 0}^{N_{M-1}-1} 
	b(\bn^{\prime}, n_M) \alpha_1^{n_1}\alpha_2^{n_2} \cdots \alpha_{M-1}^{n_{M-1}},
\end{equation}
where each $b(\bn^{\prime}, n_M)$ belongs to $K$.  When the sum on the right of (\ref{B23}) is inserted into (\ref{B22}), we 
obtain the representation (\ref{B21}). 

We have proved that the set (\ref{B21.2}) spans the field $K(\alpha_1, \alpha_2, \dots , \alpha_M)$ as a vector space over $K$.  Clearly the 
set (\ref{B21.2}) has cardinality at most $N_1N_2 \cdots N_M$.  Because
\begin{equation*}\label{B25}
K \subseteq K(\alpha_1) \subseteq K(\alpha_1, \alpha_2) \subseteq \cdots \subseteq K(\alpha_1, \alpha_2, \cdots , \alpha_M),
\end{equation*}
it follows from (\ref{B19}) and (\ref{B20}) that
\begin{equation*}\label{B27}
[K(\alpha_1, \alpha_2, \dots , \alpha_M):K] = N_1N_2 \cdots N_M.
\end{equation*}
We conclude that the set (\ref{B21.2}) is a basis for $K(\alpha_1, \alpha_2, \cdots , \alpha_M)$ over $K$.  Therefore the representation 
(\ref{B21}) is unique.
\end{proof}

Let $k$ and $l$ be distinct algebraic number fields such that $k \subseteq l$.  We establish a bound for $H(\Bbeta)$ in the special case 
where the coordinates of $\Bbeta$ generate the field extension $l/k$.  We assume that $\alpha_1, \alpha_2, \dots , \alpha_M$, 
are algebraic numbers such that
\begin{equation*}\label{early500}
l = k(\alpha_1, \alpha_2, \dots , \alpha_M).
\end{equation*}
Then it follows from Lemma \ref{lemB2} that there exist positive integers $N_1, N_2, \dots , N_M$, such that
\begin{equation*}\label{early502}
N_1 N_2 \cdots N_M = [l : k],
\end{equation*}
and the elements of the set 
\begin{equation}\label{early504}
\big\{\alpha_1^{n_1}\alpha_2^{n_2} \cdots \alpha_M^{n_M}: 0 \le n_m < N_m,\ m=1, 2, \dots , M\big\}
\end{equation}
form a basis for $l = k(\alpha_1, \alpha_2, \dots , \alpha_M)$ as a vector space over $k$.  We define a tower of intermediate fields
\begin{equation}\label{early506}
k = k_0 \subseteq k_1 \subseteq k_2 \subseteq \cdots \subseteq k_M = l,
\end{equation}
by 
\begin{equation*}\label{early508}
k_m = k(\alpha_1, \alpha_2, \dots , \alpha_m),\quad\text{where $m = 1, 2, \dots , M$}.
\end{equation*}
Then it follows from (\ref{B20}) that
\begin{equation*}\label{early510}
N_m = [k_m : k_{m-1}] = [k_{m-1}(\alpha_m) : k_{m-1}],\quad\text{for each $m = 1, 2, \dots , M$},
\end{equation*}
and
\begin{equation*}\label{early512}
N_1 N_2 \cdots N_m = [k_m : k_0],\quad\text{for each $m = 1, 2, \dots , M$}.
\end{equation*}
We note that the tower of intermediate fields (\ref{early506}) depends on the ordering of the generators $\alpha_1, \alpha_2, \dots , \alpha_M$, 
and a permutation of these generators would (in general) change the intermediate fields in the tower. 

\begin{lemma}\label{lemB3}  Let $\Bbeta$ be the vector in $l^{[l : k]}$ such that the elements of the set {\rm (\ref{early504})} are the 
coordinates of $\Bbeta$.  For each $m = 1, 2, \dots , M$, let $\ba_m$ be the vector in $k_m^{N_m}$ defined by
$\ba_m = \bigl(\alpha_m^{n_m}\bigr)$ where $n_m = 0, 1, \dots , N_m - 1$.
Then we have
\begin{equation}\label{early525}
H(\Bbeta) = \prod_{m = 1}^M H(\ba_m),
\end{equation}
and
\begin{equation}\label{early527}
\log H(\Bbeta) \le \hh \log [l : k] + \sum_{m = 1}^M (N_m - 1) h(\alpha_m).
\end{equation}
\end{lemma}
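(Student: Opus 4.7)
The plan is to prove the multiplicative identity \eqref{early525} by establishing, at every place $w$ of $l$, the pointwise factorization $\|\Bbeta\|_w = \prod_{m=1}^{M} \|\ba_m\|_w$, and then deduce the logarithmic bound \eqref{early527} by invoking Lemma \ref{lemB1} on each $\ba_m$ and summing.

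For the local factorization, I would fix a place $w$ of $l$ and compute $\|\Bbeta\|_w$ directly from the definitions in Section \ref{prelim}. The coordinates of $\Bbeta$ are indexed by tuples $(n_1, n_2, \dots , n_M)$ with $0 \le n_m < N_m$, and the coordinate at index $\bn$ is $\alpha_1^{n_1}\alpha_2^{n_2}\cdots\alpha_M^{n_M}$. At an archimedean place $w$, one writes
\[
\|\Bbeta\|_w^2 = \sum_{n_1=0}^{N_1-1}\cdots \sum_{n_M=0}^{N_M-1} \prod_{m=1}^{M} \|\alpha_m\|_w^{2 n_m} = \prod_{m=1}^{M} \sum_{n_m=0}^{N_m-1} \|\alpha_m\|_w^{2 n_m} = \prod_{m=1}^{M} \|\ba_m\|_w^2,
\]
and the positive square root yields $\|\Bbeta\|_w = \prod_m \|\ba_m\|_w$. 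At a nonarchimedean place the maximum over tuples of a product of independent nonnegative factors splits as the product of the one-variable maxima, giving the same identity. Raising both sides to the power $d_w/d$, with $d=[l:\Q]$, converts this to the corresponding identity for the normalized absolute value $|\ \cdot\ |_w$. Taking the product over all places $w$, and using that the Arakelov height of each $\ba_m$ does not depend on the ambient field in which it is viewed, proves \eqref{early525}.

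For \eqref{early527}, I would apply Lemma \ref{lemB1} with base field $k_{m-1}$ and algebraic number $\alpha_m$; by construction of the tower \eqref{early506} and the definition \eqref{B20}, we have $N_m = [k_{m-1}(\alpha_m) : k_{m-1}]$, so the lemma gives $\log H(\ba_m) \le \tfrac12 \log N_m + (N_m-1) h(\alpha_m)$. Taking the logarithm of \eqref{early525} and summing these bounds over $m = 1, 2, \dots , M$ gives
\[
\log H(\Bbeta) \le \tfrac12 \sum_{m=1}^{M} \log N_m + \sum_{m=1}^{M} (N_m-1) h(\alpha_m) = \tfrac12 \log [l:k] + \sum_{m=1}^{M} (N_m-1) h(\alpha_m),
\]
where the final equality uses $N_1 N_2 \cdots N_M = [l:k]$ from Lemma \ref{lemB2}.

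There is no substantial obstacle here: the only point deserving care is the distributive factorization of $\|\Bbeta\|_w$ at archimedean places, which reduces to the algebraic identity that a sum of products over a cartesian index set factors as a product of sums. The rest is bookkeeping, together with the observation that $N_m$ is the degree of the correct relative extension needed to apply Lemma \ref{lemB1}.
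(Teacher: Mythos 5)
Your proposal is correct and follows essentially the same route as the paper: the local factorization $\|\Bbeta\|_w=\prod_m\|\ba_m\|_w$ at archimedean and nonarchimedean places gives \eqref{early525}, and then Lemma \ref{lemB1} applied to each $\ba_m$ (with $N_m=[k_{m-1}(\alpha_m):k_{m-1}]$) summed over $m$, together with $N_1\cdots N_M=[l:k]$, gives \eqref{early527}. The only cosmetic difference is that you factor the norm at each fixed place before taking the product over places, while the paper manipulates the products over places directly; the computation is the same.
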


\begin{proof}  At each archimedean place $w$ of $l$ we have
\begin{equation}\label{early530}
\begin{split}
\prod_{w|\infty} \|\Bbeta\|_w &= \prod_{w|\infty} \biggl(\sum_{n_1 = 0}^{N_1-1} \sum_{n_2 = 0}^{N_2-1} \cdots \sum_{n_M = 0}^{N_M-1} 
	\|\alpha_1^{n_1}\|_w^2 \|\alpha_2^{n_2}\|_w^2 \cdots \|\alpha_M^{n_M}\|_w^2\biggr)^{\h}\\
	&= \prod_{w | \infty} \biggl(\prod_{m = 1}^M \sum_{n_m = 0}^{N_m-1} \|\alpha_m^{n_m}\|_w^2\biggr)^{\h}\\
	&= \prod_{m = 1}^M \prod_{w|\infty} \biggl(\sum_{n_m = 0}^{N_m-1} \|\alpha_m^{n_m}\|_w^2\biggr)^{\h}\\
	&= \prod_{m = 1}^M \prod_{w|\infty} \|\ba_m\|_w.
\end{split}
\end{equation}
At each nonarchimedean place $w$ of $l$ we find that
\begin{equation}\label{early535}
\begin{split}
\prod_{w\nmid \infty} \|\Bbeta\|_w &= \prod_{w|\infty} \max\big\{\|\alpha_1^{n_1} \alpha_2^{n_2} \cdots \alpha_M^{n_M}\|_w : 0 \le n_m < N_m\big\}\\
	&= \prod_{w \nmid \infty} \prod_{m = 1}^M \max\big\{\|\alpha_m^{n_m}\|_w : 0 \le n_m < N_m\big\}\\
	&= \prod_{m = 1}^M \prod_{w \nmid \infty} \max\big\{\|\alpha_m^{n_m}\|_w : 0 \le n_m < N_m\big\}\\
	&= \prod_{m = 1}^M \prod_{w\nmid \infty} \|\ba_m\|_w.
\end{split}
\end{equation}
Clearly (\ref{early525}) follows from (\ref{early530}) and (\ref{early535}).  Then using (\ref{early525}) and the inequality (\ref{B107}) we get
\begin{equation}\label{early545}
\begin{split}
\log H(\Bbeta) &= \sum_{m = 1}^M \log H(\ba_m)\\
	&\le \sum_{m = 1}^M \Bigl(\hh \log N_m + (N_m - 1)h(\alpha_m)\Bigr)\\
	&= \hh \log [l : k] + \sum_{m = 1}^M (N_m - 1) h(\alpha_m).
\end{split}
\end{equation}
This verifies (\ref{early527}).
\end{proof}

We conclude this section with an inequality that will be very useful in our proofs. Let $\alpha$ be an algebraic number, $m = [\mathbb{Q}(\alpha):\mathbb{Q}]$, 
and $D_{\mathbb{Q}(\alpha)}$ the absolute discriminant of the number field $\Q(\alpha)$.  From \eqref{third129} and \eqref{early527}, we obtain 
\begin{equation}\label{Dhm}
h(\alpha) \geq  \frac{\log \frac{D_{\mathbb{Q}(\alpha)}}{m^m}}{2m (m-1)}.
\end{equation}
A similar inequality has been established in \cite{silverman1983} by a different method. 

\section{A special intermediate field with large rank; \\ Proof of Theorem \ref{Sil-rho}}\label{therhomethod}

Suppose $k$ is a number field of degree $d$.  Let $r$ be the rank of the unit group $O_k^{\times}$ in $k$.  
By  \cite[Theorem 1.2]{akhtari2016} there exist multiplicatively independent
elements $\alpha_1, \alpha_2, \dots , \alpha_r$ in $O_k^{\times}$ such that
\begin{equation}\label{early361}
d^r \, \prod_{j = 1}^r h(\alpha_j) \le \frac{2^r (r!)^3}{(2r)!} \Reg(k),
\end{equation}
where $\Reg(k)$ is the regulator of $k$.  If we assume now that $k$ is not a $\CM$-field, then the rank of the unit group $O_k^{\times}$ is
strictly larger than the rank of the unit group in each proper subfield of $k$.  As the multiplicative group generated by 
$\alpha_1, \alpha_2, \dots , \alpha_r$ has rank equal to the rank of $O_k^{\times}$, it follows that
\begin{equation}\label{early365}
k = \Q(\alpha_1, \alpha_2, \dots , \alpha_r).
\end{equation} 
Applying Lemma \ref{lemB2} to (\ref{early365}), we conclude that there exist positive integers 
\begin{equation*}\label{early367}
N_1, N_2, \dots , N_r, 
\end{equation*}
and a corresponding tower of intermediate fields
\begin{equation*}\label{early370}
k_0 = \Q \subseteq k_1 \subseteq k_2 \subseteq k_3 \subseteq \cdots \subseteq k_r = k,
\end{equation*}
such that
\begin{equation}\label{early375}
k_j = \Q(\alpha_1, \alpha_2, \dots , \alpha_j),\quad\text{where $j = 1, 2, \dots , r$},
\end{equation}
\
\begin{equation*}\label{early380}
N_j = [k_j : k_{j-1}] = [k_{j-1}(\alpha_j) : k_{j-1}],\quad\text{for each $j = 1, 2, \dots , r$},
\end{equation*}
and
\begin{equation}\label{early385}
N_1 N_2 \cdots N_j = [k_j : \Q],\quad\text{for each $j = 1, 2, \dots , r$}.
\end{equation}
In particular, (\ref{early385}) with $j = r$ is also
\begin{equation*}\label{early388}
N_1 N_2 \cdots N_r = d.
\end{equation*}
Moreover, from (\ref{third129}) and (\ref{early545}) we get the inequality
\begin{equation}\label{early390}
\log D_k \le 2d \log H(\Bbeta) \le d \log d + 2d \sum_{j = 1}^r (N_j - 1) h(\alpha_j),
\end{equation}
where $\Bbeta$ is the vector in $\B(k)$ such that the elements of the set
\begin{equation*}\label{early400}
\big\{\alpha_1^{n_1} \alpha_2^{n_2} \cdots \alpha_r^{n_r} : \text{$0 \le n_j < N_j$, and $j = 1, 2, \dots , r$}\big\}
\end{equation*}
are the coordinates of $\Bbeta$.

It follows from (\ref{early375}) that the unit group $O_{k_j}^{\times}$ contains the collection of $j$ multiplicatively 
independent units $\alpha_1, \alpha_2, \dots , \alpha_j$.  Therefore we have
\begin{equation}\label{extra10} 
j \le \rank O_{k_j}^{\times},\quad\text{for each $j = 1, 2, \dots , r$}.
\end{equation}
As defined in \eqref{extra15}, let
\begin{equation*}\label{extra13}
\rho(k) = \max\big\{\rank O_{k^{\prime}}^{\times} : \text{$k^{\prime} \subseteq k$ and $k^{\prime} \not= k$}\big\}.
\end{equation*}
Because $k$ is not a $\CM$-field, we have $\rho(k) < r$.  It will also be convenient to define 
\begin{equation}\label{extra18}
q = \min\{j : \text{$1 \le j \le r$ and $k_j = k$}\} \le \rho(k) + 1,
\end{equation}
where the inequality on the right of (\ref{extra18}) follows from (\ref{extra10}) and the definition of $\rho(k)$.  Using the positive integer $q$
we find that
\begin{equation*}\label{extra27}
j \le \rank O_{k_j}^{\times} \le \rho(k),\quad\text{if and only if $1 \le j \le q - 1$},
\end{equation*}
and
\begin{equation*}\label{extra29}
k_j = k,\quad\text{if and only if $q \le j \le r$}.
\end{equation*}
It follows that
\begin{equation*}\label{extra32}
2 \le N_q = [k_q : k_{q-1}] = [k : k_{q-1}],
\end{equation*}
and
\begin{equation*}\label{extra33}
N_j = 1\quad\text{for $q < j \le r$}.
\end{equation*}
Thus the inequality (\ref{early390}) can be written as
\begin{equation*}\label{extra35}
\frac{\log D_k - d \log d}{2d} \le \sum_{j = 1}^q (N_j - 1) h(\alpha_j).
\end{equation*}
It is clear that an advantageous ordering of the independent units $\alpha_1, \alpha_2, \dots , \alpha_r$ would be
\begin{equation}\label{extra40}
0 < h(\alpha_1) \le h(\alpha_2) \le \dots \le h(\alpha_r),
\end{equation}
which we assume from now on.
Finally, as $N_1, N_2, \dots , N_q$ are positive integers, the inequality 
\begin{equation*}\label{extra42}
\sum_{j = 1}^q (N_j - 1) \le (N_1 N_2 \cdots N_q) - 1 = d - 1
\end{equation*}
is easy to verify by induction on $q$.  Then from \eqref{extra40} we get
\begin{equation*}\label{extra45}
\begin{split}
\frac{\log D_k - d \log d}{2d} &\le \sum_{j = 1}^q (N_j - 1) h(\alpha_j)\\
	               &\le h(\alpha_q) \sum_{j = 1}^q (N_j - 1)\\
		       &\le (d - 1) h(\alpha_q),
\end{split}
\end{equation*}
which we write as
\begin{equation}\label{extra51}
\frac{\log D_k - d \log d}{2d} \le d h(\alpha_q).
\end{equation}
Plainly the inequality (\ref{extra51}) is of interest if and only if
\begin{equation*}\label{extra56}
0 < \log D_k - d \log d,
\end{equation*}
which is also the hypothesis of Theorem \ref{Sil-rho}.  Then it follows from (\ref{extra40}) that
\begin{equation}\label{extra59}
\frac{\log D_k - d \log d}{2d} \le d h(\alpha_j).
\end{equation}
for each 
\begin{equation*}\label{extra60}
j = q, q+1, q+2, \dots , r.  
\end{equation*}
Since the value of $q$ is unknown and depends on the ordering (\ref{extra40}), we use (\ref{extra59})
in the more restricted range 
\begin{equation*}\label{extra63}
j = \rho(k) + 1, \rho(k) + 2, \dots , r.  
\end{equation*}
Then (\ref{extra40}) and (\ref{extra59}) imply that
\begin{equation}\label{extra67}
\biggl(\frac{\log D_k - d \log d}{2d}\biggr)^{r - \rho(k)} \le \prod_{j = \rho(k) + 1}^r \bigl(d h(\alpha_j)\bigr).
\end{equation}

In order to obtain the desired explicit bounds in Theorem \ref{Sil-rho}, we apply results of  Dobrowolski in \cite{dobrowolski1979} and Voutier in 
\cite{voutier1996}.  From \cite{dobrowolski1979} there exists a positive constant $c'(d)$, which
depends only on the degree $d = [k : \Q]$, such that the inequality
\begin{equation}\label{extra71}
c'(d) \le d h(\gamma)
\end{equation}
holds for algebraic numbers $\gamma$ in $k^{\times}$ which are not roots of unity.  Then from (\ref{early361}), (\ref{extra67}), and (\ref{extra71}),
we get
\begin{equation}\label{extra76}
\begin{split}
c'(d)^{\rho(k)} \biggl(\frac{\log D_k  - d \log d}{2d}\biggr)^{r - \rho(k)} 
		&\le \prod_{j = 1}^r \bigl(d h(\alpha_j)\bigr)\\
		&\le \frac{2^r (r!)^3}{(2r)!} \Reg(k).
\end{split}
\end{equation}
From \cite{voutier1996} we have
\begin{equation}\label{extra84}
\tfrac{1}{4}\biggl(\frac{\log \log d}{\log d}\biggr)^3 \le c'(d)
\end{equation}
for each number field $k \not= \Q$.  Hence (\ref{extra76}) and (\ref{extra84}) lead to the explicit inequality
\begin{equation*}\label{extra90}
\biggl(\frac{\log \log d}{2 \log d}\biggr)^{3 \rho(k)} \biggl(\frac{\log D_k  - d \log d}{4 d}\biggr)^{r - \rho(k)} 
		\le  \frac{(r!)^3}{(2r)!} \Reg(k).
\end{equation*} 
This completes the proof of Theorem \ref{Sil-rho}.
\bigskip

\textbf{Remark}. From the work of Amoroso and David \cite{AmDa} (see also Theorem  4.4.7 in \cite{bombieri2006}), we get
\begin{equation}\label{AmoDavid}
c\,  n^{-1}  (1 + \log n)^{- \rho \kappa} \le \prod_{i=1}^{\rho(k)} h(\alpha_{i}),
\end{equation}
where 
$$
n= [ \mathbb{Q}(\alpha_{1}, \ldots, \alpha_{\rho}): \mathbb{Q}] ,
$$
and $c$ and $\kappa$ depend only on the number of algebraic numbers in the product on the right hand side of \eqref{AmoDavid}, which 
in our case is $\rho = \rho(k)$. 
 
From (\ref{early361}), (\ref{extra67}),  (\ref{AmoDavid}), and since $n < d = [k: \Q]$,
we get
\begin{equation*}\label{extra7600}
\begin{split}
c\, d^{\rho(k)-1} (1 + \log d)^{- \rho(k) \kappa} \biggl(\frac{\log D_k  - d \log d}{2d}\biggr)^{r - \rho(k)} 
		&\le \prod_{j = 1}^r \bigl(d h(\alpha_j)\bigr)\\
		&\le \frac{2^r (r!)^3}{(2r)!} \Reg(k).
\end{split}
\end{equation*}
This implies the inequality \eqref{amorosodavidapplication}. A completely explicit version of  (\ref{AmoDavid}) is given in Corollary 1.6 of \cite{AmoV} and implies 
\begin{equation*}\label{amorosovpplication*}
\frac{(2r)!}{(r!)^3} \, \frac{d^{\rho(k)-1}} {\left(1050\, \rho(k)^5 \log (1.5)d\right)^{\rho^2(k) (\rho(k)+1)^2}} \biggl(\frac{\log \left(d^{-d} D_k \right)}{4d}\biggr)^{r - \rho(k)}
\le  \Reg(k),
\end{equation*}
where the dependence on $\rho(k)$ is unlikely to be optimal.

\section{A special intermediate field with optimal discriminant;\\ Proof of Theorem \ref{Sil-alpha}}\label{thealphafunction}

Let $k$ be an algebraic number field, and $\I( k)$  the set of
intermediate number fields $k'$ such that $\mathbb{Q} \subseteq k'  \subseteq k$.  We will define two maps 
$$
\lambda :  \I( k) \rightarrow \mathbb{N} \cup \{0\}
$$
and 
$$
\aleph :  \I( k) \rightarrow (0, 1].
$$
For each number field $k' \in \I(k)$ we define $\lambda\bigl(k'\bigr)$ to be the maximum length of a tower of subfields of $k$ that begins at 
$\mathbb{Q}$ and ends at $k'$, with $\lambda(\mathbb{Q}) = 0$.  If $k_1$ and $k_2$ are distinct intermediate fields such that 
\begin{equation*}\label{A1}
\mathbb{Q} \subseteq k_1 \subseteq k_2 \subseteq k,
\end{equation*}
then it is obvious that
\begin{equation}\label{A3}
\lambda(k_1) < \lambda(k_2) \leq \lambda(k) \leq \log_{2} d,
\end{equation}
where $d = [k:\mathbb{Q}]$.

For each number field $k'$ we write $D_{k'}$ for the absolute discriminant of $k'$.  For $k' \subseteq k$, we have 
(see \cite[Corollary to Proposition 4.15]{narkiewicz2010}) $D_{k'}^{[k : k']} \mid D_{k}$, and if  $k' \neq k$ we have
\begin{equation}\label{A10}
D_{k'} < D_{k}^{[k:k']^{-1}}.
\end{equation}
In order to better control the change in the absolute values of discriminants of intermediate fields, we normalize the exponent $[k:k']^{-1}$ in the 
above inequality.  For each subfield $k'$ of $k$, we define
\begin{equation}\label{A23}
\aleph \bigl(k'\bigr): = \bigl(2 \bigl[k : k'\bigr]\bigr)^{\lambda(k') - \lambda(k)}.
\end{equation}

First we prove two useful lemmas about properties of the function $\aleph$.
\begin{lemma}
Let
\begin{equation*}\label{A5}
\mathbb{Q} \subseteq k_1 \subseteq k_2 \subseteq \cdots \subseteq k_{N-1} \subseteq k_N = k
\end{equation*}
be a tower of length $N$, containing $N+1$ distinct number fields.
We have
\begin{equation*}\label{A34}
0 < \aleph \bigl(\mathbb{Q}\bigr) < \aleph \bigl(k_1\bigr) < \aleph \bigl(k_2\bigr) < \cdots < \aleph \bigl(k_{N-1}\bigr) < \aleph \bigl(k_N\bigr) = 1.
\end{equation*}
\end{lemma}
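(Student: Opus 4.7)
The plan is to prove the three assertions --- positivity, the endpoint value $\aleph(k_N) = 1$, and strict monotonicity --- directly from the definition $\aleph(k') = (2[k:k'])^{\lambda(k') - \lambda(k)}$ in \eqref{A23}.  The endpoint is immediate: at $k' = k$ we have $[k:k] = 1$ and the exponent is $0$, so $\aleph(k) = 2^0 = 1$.  Positivity of $\aleph(k')$ for every $k' \in \I(k)$ is also immediate, since the base $2[k:k']$ is a positive real and the exponent is an integer.

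For strict monotonicity, I would fix an index $i$ with $1 \le i \le N-1$ (extending by the convention $k_0 = \mathbb{Q}$ to capture $\aleph(\mathbb{Q}) < \aleph(k_1)$ as well).  Since $k_i \subsetneq k_{i+1} \subseteq k$, the inequality \eqref{A3} gives $\lambda(k_i) < \lambda(k_{i+1}) \le \lambda(k)$, so the integers
$$
a = \lambda(k) - \lambda(k_i), \qquad b = \lambda(k) - \lambda(k_{i+1})
$$
satisfy $a \ge b + 1$ and $b \ge 0$.  Moreover, the multiplicativity of degrees in the tower together with $[k_{i+1}:k_i] \ge 2$ yields $[k:k_i] = [k_{i+1}:k_i]\,[k:k_{i+1}] \ge 2\,[k:k_{i+1}]$, and hence
$$
\log\bigl(2[k:k_i]\bigr) \ge \log 2 + \log\bigl(2[k:k_{i+1}]\bigr).
$$
After taking logarithms of the two values of $\aleph$ and negating, the desired strict inequality $\aleph(k_i) < \aleph(k_{i+1})$ reduces to
$$
a \log\bigl(2[k:k_i]\bigr) \;>\; b \log\bigl(2[k:k_{i+1}]\bigr).
$$

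The only real content is to combine the two sources of dominance: the exponent decreases strictly ($a \ge b + 1$) and the logarithm of the base also decreases ($\log(2[k:k_i]) \ge \log 2 + \log(2[k:k_{i+1}])$).  Both sides of the target inequality are nonnegative, and I would expand $(b+1)\bigl(\log 2 + \log(2[k:k_{i+1}])\bigr)$, using $\log 2 > 0$ and $\log(2[k:k_{i+1}]) \ge \log 2$ to absorb the remainder terms and obtain strict inequality.  The degenerate case $k_{i+1} = k$ (where $b = 0$) is handled trivially, as the right-hand side vanishes while the left-hand side is strictly positive.  Chaining these strict inequalities for $i = 0, 1, \dots, N-1$ and appending $\aleph(k_N) = 1$ produces the full display in the lemma; the main obstacle is purely bookkeeping and amounts to the careful case split at $b = 0$.
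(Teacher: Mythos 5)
Your proof is correct and takes essentially the same route as the paper: both compare $\aleph(k_i)$ and $\aleph(k_{i+1})$ directly from the definition \eqref{A23}, using that $\lambda$ strictly increases along the tower (your $a \ge b+1$, from \eqref{A3}) together with the degree relation $[k:k_i] = [k_{i+1}:k_i]\,[k:k_{i+1}] \ge 2[k:k_{i+1}]$; you simply carry this out additively with logarithms, whereas the paper writes the same comparison multiplicatively as a factorization of $\aleph(k_i)$ against $\aleph(k_{i+1})$. Your bookkeeping, including the harmless $b=0$ case, is sound, so no changes are needed.
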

\begin{proof}
By the definition of the function $\aleph$ in \eqref{A23}, we have $\aleph(k) = 1$ and $\aleph \bigl(\mathbb{Q}\bigr)> 0$.  Now suppose that  
$k_1$, $k_2$ are distinct intermediate fields, with $\mathbb{Q} \subseteq k_{1} \subseteq  k_{2} \subseteq k$, by \eqref{A3} we have 
\begin{equation*}\label{A27}
\begin{split}
\aleph\bigl(k_1\bigr) &= \bigl(2 \, \bigl[k : k_1\bigr]\bigr)^{\lambda(k_1) - \lambda(k)}\\
	&= \bigl(2\, \bigl[k : k_2\bigr]\bigr)^{\lambda(k_2) - \lambda(k)} 			
	\bigl(2\,  \bigl[k : k_2\bigr]\bigr)^{\lambda(k_1) - \lambda(k_2)} \bigl(2\, \bigl[k_2 : k_1\bigr]\bigr)^{\lambda(k_1) - \lambda(k)}\\
	&< \bigl(2\,  \bigl[k : k_2\bigr]\bigr)^{\lambda(k_2) - \lambda(k)}\\
	&= \aleph \bigl(k_2\bigr).
\end{split}
\end{equation*}
\end{proof}

\begin{lemma}\label{why2}
Let $\Q \subseteq k' \subseteq k$. Assume $\alpha \in k$ and $\alpha \not \in k'$ so that $k' \subseteq k'(\alpha) \subseteq k$. We have
\begin{equation}\label{alphadif}
   \aleph(k'(\alpha)) - \aleph(k') [k: k']   \geq   2^{\lambda(k')- \lambda(k)}\left( [k: k']\right)^{\lambda(k')- \lambda(k) +1}.
    \end{equation}
\end{lemma}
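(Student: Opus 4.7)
The plan is to recognize that the right-hand side of \eqref{alphadif} is a disguised form of $\aleph(k')\,[k:k']$, which immediately reveals the content of the inequality. Set the abbreviations $m=[k:k']$, $n=[k:k'(\alpha)]$, $a=\lambda(k')$, $b=\lambda(k'(\alpha))$, $\ell=\lambda(k)$. Then a direct computation using \eqref{A23} gives
\begin{equation*}
\aleph(k')\,[k:k'] \;=\; (2m)^{a-\ell}\cdot m \;=\; 2^{a-\ell}\,m^{a-\ell+1},
\end{equation*}
which is exactly the right-hand side of \eqref{alphadif}. Thus the claim is equivalent to
\begin{equation*}
\aleph(k'(\alpha)) \;\geq\; 2\,\aleph(k')\,[k:k'],
\end{equation*}
which, after substituting the definition of $\aleph$, reduces to the clean power comparison
\begin{equation*}
(2n)^{b-\ell} \;\geq\; (2m)^{a+1-\ell}.
\end{equation*}

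Two structural facts from the hypothesis drive the argument. First, because $k'\subsetneq k'(\alpha)\subseteq k$, appending $k'(\alpha)$ to any maximal tower from $\Q$ to $k'$ exhibits a longer tower ending at $k'(\alpha)$, so $b\geq a+1$; iterating one more step up to $k$ gives $a+1\le b\le\ell$, and in particular $a+1-\ell\le 0$. Second, since $\alpha\notin k'$ we have $[k'(\alpha):k']\geq 2$, whence $n=m/[k'(\alpha):k']\leq m/2$; in particular $2\le 2n\le m\le 2m$.

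From here a short two-step monotonicity argument finishes the proof. Since $2n\geq 2>1$, the map $t\mapsto(2n)^t$ is strictly increasing, and $b-\ell\geq a+1-\ell$ yields $(2n)^{b-\ell}\geq (2n)^{a+1-\ell}$. Next, since the exponent $a+1-\ell$ is nonpositive and $0<2n\leq 2m$, the map $x\mapsto x^{a+1-\ell}$ is nonincreasing in $x>0$, so $(2n)^{a+1-\ell}\geq (2m)^{a+1-\ell}$. Chaining the two inequalities produces the desired $(2n)^{b-\ell}\geq(2m)^{a+1-\ell}$, completing the proof. The only real point requiring care is the sign bookkeeping of the exponents $b-\ell$ and $a+1-\ell$; once one verifies $a\leq\ell-1$ (forced by the strict chain $k'\subsetneq k'(\alpha)\subseteq k$ and the definition of $\lambda$), the rest is routine.
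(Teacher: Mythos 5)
Your proof is correct and follows essentially the same route as the paper: both rest on $\lambda(k'(\alpha)) \geq \lambda(k')+1$ and $2\,[k:k'(\alpha)] \leq [k:k']$ combined with elementary monotonicity in the base and the (nonpositive) exponent. Your reformulation that the right-hand side of \eqref{alphadif} equals $\aleph(k')\,[k:k']$, so that the claim is $\aleph(k'(\alpha)) \geq 2\,\aleph(k')\,[k:k']$, simply makes explicit the bookkeeping the paper leaves to the reader.
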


\begin{proof}
Since  $\lambda(k'(\alpha)) \geq \lambda(k')+1$, using the  definition of the function $\aleph$ in \eqref{A23}, we obtain
\begin{eqnarray}\nonumber
\aleph(k'(\alpha)) - \aleph(k') [k: k'] &=&\left(2 [k: k'(\alpha)]\right)^{\lambda(k'(\alpha))- \lambda(k)} 
		 - \left(2 [k: k']\right)^{\lambda(k')- \lambda(k)} [k: k']\\ 
  &\geq & 2^{\lambda(k')- \lambda(k)}\left( [k: k']\right)^{\lambda(k')- \lambda(k) +1},
  \end{eqnarray}
where the inequality follows from our assumption  that $\alpha \not \in k'$ and therefore  $$2 [k: k'(\alpha)] \leq [k:k'].$$
\end{proof}

The proof of Lemma \ref{why2} explains the reason why the factor $2$ in the definition of the function $\aleph$ cannot be replaced by a larger number.  Unlike the inequality \eqref{A10} which holds for every $k' \subseteq k$, we could have
\begin{equation*}\label{either-or}
D_{k'} <  D_{k}^{\aleph(k')}    \qquad \textrm{or}\, \qquad  D_{k'} \geq  D_{k}^{\aleph(k')}.
\end{equation*}
In fact, we have
$$
1  = D_{\mathbb{Q}} < D_{k}^{\aleph (\mathbb{Q})}  \, \,   \, \, \textrm{and}\, \, D_k = D_{k}^{\aleph(k)}.
$$
Therefore there exists  a \emph{maximal} field $k^{\ast}  \in \I(k)$, with $k^{\ast} \neq k$ such that 
$$
D_{k^{\ast}} < D_{k}^{\aleph(k^{\ast})},
$$
where by  \emph{maximal} we mean that if $k^{\ast} \subseteq k' \subseteq k$ and $k^{\ast} \neq k'$  then 
$$
D_{k'} \geq  D_{k}^{\aleph(k')}.
$$
It could happen that all proper subfields of $k$ have large enough absolute discriminant so that  $k^{\ast}= \mathbb{Q}$.

 Now having a maximal subfield $k^{\ast}$ of $k$ fixed, we will find independent units $\beta_{1}, \ldots, \beta_{r(k^{\ast})}$ in $k^{\ast}$ and 
 independent relative units   
 $\psi_{1}, \ldots, \psi_{r(k/k^{\ast})}$ in $k$ that satisfy Proposition \ref{lemshort2}. By our choice of $k^{\ast}$, for every 
 $\psi \in \{ \psi_{1}, \ldots, \psi_{r(k/k^{\ast})}\}$, we have
 \begin{equation}\label{ourk}
 D_{k^{\ast}} <  D_{k}^{\aleph(k^{\ast})} \qquad \textrm{and} \qquad D_{k^{\ast}(\psi)} \geq  D_{k}^{\aleph(k^{\ast}(\psi))}.
 \end{equation}
 
 By \eqref{intro61} and \eqref{short440}, and applying \cite[Theorem 1.1]{akhtari2016} to the intermediate  number field $k^{\ast}$, we have
 \begin{equation}\label{boundbetas}
 0.2  < \prod_{i=1}^{r(k^{\ast})} [k^{\ast}:\mathbb{Q}] h(\beta_{i}).
 \end{equation}
 
 Let  $\psi \in \{ \psi_{1}, \ldots, \psi_{r(k/k^{\ast})}\}$. By  \cite[Proposition 4.15]{narkiewicz2010}),
 \begin{equation}\label{DKL}
 D_{k}^{\aleph(k^{\ast}(\psi))} \leq  D_{k^{\ast}(\psi)}
 \leq  D_{k^{\ast}} ^{ [k^{\ast}(\psi): k^{\ast}] } D_{\mathbb{Q}(\psi)}^{ [k^{\ast}(\psi): \mathbb{Q}(\psi)] }\, .
 \end{equation}
 We will consider two cases. First we assume that $k^{\ast} = \mathbb{Q}$. By definition, we have
$$
\aleph(\mathbb{Q}(\psi)) = \left(2 [k: \mathbb{Q}(\psi)]\right)^{\lambda\left(\mathbb{Q}(\psi) \right) - \lambda(k)} \geq d^{1 - \log_{2}d},
$$
where $d = [k:\mathbb{Q}]$.  Therefore, by \eqref{DKL},  we have
\begin{equation}\label{DKLforQ}
 D_{k}^{ d^{1 - \log_{2}d}} \leq D_{k}^{\aleph(\mathbb{Q}(\psi))} \leq  D_{\mathbb{Q}(\psi)}.
\end{equation}
For the second case, assume $k^{\ast} \neq \mathbb{Q}$.  By \eqref{ourk} and \eqref{DKL},  and  since  
 $$
 [k^{\ast}(\psi): \mathbb{Q}(\psi)] \leq \frac{d}{2},
 $$ 
 we have
 \begin{eqnarray}\label{DKLfor2}\nonumber
 D_{k}^{\aleph (k^{\ast}(\psi))} &\leq&  D_{k^{\ast}(\psi)} \leq  D_{k^{\ast}} ^{ [k^{\ast}(\psi): k^{\ast}] } D_{\mathbb{Q}(\psi)}^{ [k^{\ast}(\psi): \mathbb{Q}(\psi)] }\\
 &<&  D_{k} ^{\aleph (k^{\ast}) [k^{\ast}(\psi): k^{\ast}] } D_{\mathbb{Q}(\psi)}^{d/2}\\ \nonumber
 &\leq&  D_{k} ^{\aleph (k^{\ast}) [k: k^{\ast}] } D_{\mathbb{Q}(\psi)}^{d/2} . \end{eqnarray}

 Therefore,
 \begin{equation}\label{DDl}
 \log D_{\mathbb{Q}(\psi)} >  {\frac{2\left(\aleph (k^{\ast}(\psi))-\aleph (k^{\ast}) [k: k^{\ast}]\right)}{d}  } \log D_{k}
 \end{equation}
 Taking $k' = k^{\ast}$ in \eqref{alphadif}, we get
 \begin{eqnarray}\label{alphadif0}
 \aleph(k^{\ast}(\psi)) - \aleph(k^{\ast}) [k: k^{\ast}]   &\geq  & 2^{\lambda(k^{\ast})- \lambda(k)}\left( [k: k^{\ast}]\right)^{\lambda(k^{\ast})- \lambda(k) +1}\\ \nonumber &\geq  & d^{\lambda(k^{\ast})-\lambda(k)} \left(\frac{d}{2}\right),
 \end{eqnarray}
 where the last inequality is a consequence  of our assumption that $k^{\ast} \neq \mathbb{Q}$, and therefore $[k:k^{\ast}] \leq \frac{d}{2}$.
 By \eqref{DDl} and \eqref{alphadif0}, we have
 \begin{equation}\label{DKL2}
 D_{\mathbb{Q}(\psi)} >   D_{k}^{d^{\lambda(k^{\ast})-\lambda(k)} } \geq D_{k}^{d^{1 - \log_{2}d} }.
 \end{equation}
 
 Let $m = [\mathbb{Q}(\psi): \mathbb{Q}]$. By \eqref{Dhm}, we have
   $$
   h(\psi) \geq  \frac{\log \frac{D_{\mathbb{Q}(\psi)}}{m^m}}{2m (m-1)}.
   $$
   This, together with \eqref{DKLforQ} and \eqref{DKL2}, implies that
   $$
   h(\psi) \geq  \frac{\log \frac{D_{k}^{d^{1 - \log_{2}d} }}{m^m}}{2m (m-1)}.
   $$
   If $k^{\ast} \neq \Q$ or $\Q(\psi) \neq k$, we have $m \leq \frac{d}{2}$, and therefore,
\begin{equation}\label{h>Dl}
   h(\psi) \geq  \frac{2\, \log D_{k}^{d^{1 - \log_{2}d} } - d\log{d}}{d (d-2)}.
   \end{equation}
In case $k^{\ast} = \Q$ and $\Q(\psi) = k$, by \eqref{Dhm}, we obtain
   $$
   h(\psi) \geq \frac{\log \frac{D_{k}}{d^d}}{2d (d-1)} > \frac{2\, \log D_{k}^{d^{1 - \log_{2}d} } - d\log{d}}{d (d-2)}.
   $$
Now Theorem \ref{Sil-alpha} follows from Proposition \ref{lemshort2},  and by \eqref{boundbetas}, \eqref{h>Dl}, and noticing via $r(k^{\ast}) \leq \rho(k)$ that 
\begin{equation*}\label{whatif}
r(k/k^{\ast}) \geq r(k) - \rho(k).
\end{equation*}

We conclude by noting that in case $k^{\ast}$ is $\Q$ or a quadratic imaginary extension of $\Q$, we do not need to use Proposition \ref{lemshort2} on existence of multiplicatively independent relative units with small heights. Instead, we can simply 
apply  \eqref{short440} that guarantees the existence of multiplicatively independent (ordinary) units with small heights.

\section*{Acknowledgements} The authors are grateful to the anonymous referee for helpful comments and suggestions.
Shabnam Akhtari's research has been in parts supported by  \emph{the Simons Foundation Collaboration Grants}, Award Number 635880, and by  \emph{the National Science Foundation} Award DMS-2001281.
 

\end{document}